\documentclass{amsart}
\usepackage{delimset, xcolor, pgf, array, tikz}
\usepackage[colorlinks, citecolor=cyan, pagebackref, linkcolor=magenta]{hyperref}
\usepackage{enumitem}
\setlist{itemsep=4pt, topsep=0pt, leftmargin=17pt, listparindent=11pt}

\usepackage{mathtools}
\DeclarePairedDelimiter\floor{\lfloor}{\rfloor}

\usepackage[capitalize]{cleveref}
\crefname{ineq}{Ineq.}{Ineqs.}
\creflabelformat{ineq}{~\upshape(#2#1#3)}
\crefname{ineqs}{Ineqs.}{Ineqs.}
\creflabelformat{ineqs}{~\upshape(#2#1#3)}

\usepackage{booktabs}
\newcolumntype{L}{>{$}l<{$}} 
\newcolumntype{C}{>{$}c<{$}} 

\newtheorem{theorem}{Theorem}[section]
\newtheorem{lemma}[theorem]{Lemma}
\newtheorem{proposition}[theorem]{Proposition}

\numberwithin{equation}{section}

\parskip 8pt
\hoffset -25truemm
\oddsidemargin=25truemm
\evensidemargin=25truemm
\textwidth=155truemm
\voffset -25truemm
\topmargin=25truemm
\headheight=7truemm
\headsep=5truemm
\textheight=220truemm
\baselineskip=16pt

\allowdisplaybreaks

\title[A unimodal sequence with mode at a quarter length]
{A unimodal sequence with mode at a quarter length}

\author[M.Y.C.~Liu]{Max Y.C. Liu}
\address[Max Y.C. Liu]{School of Mathematics and Statistics, Beijing Institute of Technology, Beijing 102400, P. R. China.}
\email{max@bit.edu.cn}

\author[D.G.L.~Wang]{David G.L. Wang$^*$}
\address[David G.L. Wang]{School of Mathematics and Statistics \& MIIT Key Laboratory of Mathematical Theory and Computation in Information Security, Beijing Institute of Technology, Beijing 102400, P. R. China.}
\email{glw@bit.edu.cn}
\thanks{$^*$Wang is the corresponding author, and is supported by the NSFC (Grant No.\ 12171034).}

\keywords{log-concavity; real-rootedness; unimodality; Zeilberger's algorithm}
\subjclass[2020]{05A10, 05A20, 33F10, 26C10}

\begin{document}

\begin{abstract}
We show that the number $A(n,m)$ 
of partitions with $m$ even parts and largest hook length $n$ 
is strongly unimodal with mode $\floor{(n-1)/4}$ for $n\ge 6$.
We establish this result by induction,
using a $5$-term recurrence due to Lin, Xiong and Yan,
and two $4$-term recurrences obtained by Zeilberger's algorithm.
The sequence $A(n,m)$ is not log-concave.
Using M\"obius transformation and the method of interlacing zeros,
we obtain that 
every zero of every generating function $\sum_m A(n,m)z^m$ lies 
on the left half part of the circle $\abs{z-1}=2$. Moreover,
as a direct application of Wang and Zhang's characterization of
root geometry of polynomial sequences that satisfy a recurrence of type $(1,1)$,
we see that all these zeros are densely distributed on the half circle.
\end{abstract}
\maketitle

\section{Introduction}

A sequence $(a_0,\dots,a_n)$ is \emph{unimodal} if there is an index $0\le k\le n$ such that
\[
a_0
\le a_1
\le\dots
\le a_k
\ge a_{k+1}
\ge\dots
\ge a_n.
\]
The study of sequence unimodality 
is a classical subject dating back to Newton.
Unimodality results have a number of applications, 
and are also important in their own right. 
We refer to Brenti~\cite{Bre89,Bre94} and Stanley \cite{Sta89}
as bible references of unimodal sequences arising from combinatorics.
Proving unimodalities is often difficult.
As for they are essentially inequalities,
the methods involve constructing injections, 
inductions, 
considering the stronger properties of log-concavity and real-rootedness,
and so on.

A powerful tool of studying sequences is the WZ-theory.
In particular, Zeilberger's algorithm automatically produces recurrences 
for sequences subject to some technical conditions,
See Petkovsek, Wilf, and Zeilberger \cite{PWZ96B} for a comprehensive reference of the WZ theory.

In this paper, 
we confirm a conjectured unimodality of a combinatorial sequence by induction 
and with the aid of two $4$-term recurrences (\cref{lem:rec:dd,lem:rec:G}) obtained by Zeilberger's algorithm,
and investigate the root geometry of their generating functions.

Let $\mathcal H_n$ be the set of partitions with largest hook length $n$.
Lin, Xiong and Yan \cite{LXY22X} showed
that the number $A(n,m)$ of partitions in $\mathcal H_n$ with exactly $m$ even parts
is also the number of partitions $\lambda=\lambda_1\lambda_2\dotsb\in\mathcal H_n$ 
with $m$ pairs $(\lambda_i,\lambda_{i+1})$ of consecutive parts of the same value $\lambda_i=\lambda_{i+1}$,
and 
\begin{equation}\label{rec:A.5}
A(n,m)
=A(n-1,\,m)
+A(n-1,\,m-1)
+A(n-2,\,m)
-A(n-2,\,m-1),
\end{equation}
for $n\ge 2$, with 
\[
A(0,0)=0
\quad\text{and}\quad
A(1,0)=1.
\] 
When $m=0$, \cref{rec:A.5} reduces to
\[
A(n,0)=A(n-1,\,0)+A(n-2,\,0).
\]
Thus $A(n,0)$ is the $n$th Fibonacci number.
In terms of the generating function
\[
F_n(z)=\sum_{m}A(n,m)z^m,
\]
\cref{rec:A.5} is equivalent to say that
\begin{equation}\label{gf:A}
F_n(z)=(1+z)F_{n-1}(z)+(1-z)F_{n-2}(z)\quad\text{for $n\ge 3$},
\end{equation}
with initial values $F_1(z)=1$ and $F_2(z)=1+z$.
By \cref{rec:A.5}, it is routine to verify that 
\begin{equation}\label{def:A}
A(n,m)=\sum_{k}\binom{k}{m}\binom{n-1-k}{k-m},\quad\text{for all $n,m\in\mathbb Z$.}
\end{equation}
Lin~\cite{Lin22W} conjectures that the sequence $\{A(n,m)\}_m$ is unimodal with mode $m^*=\floor{(n-1)/4}$, i.e.,
\[
A(n,0)
\le A(n,1)
\le\dots
\le A(n,m^*)
\ge A(n,\,m^*+1)
\ge A(n,\,m^*+2)
\ge \dots
\ge A(n,\,n-1).
\]
The values of $A(n,m)$ for small $n$ and $m$ can be found from \cref{tab:A}.
\begin{table}[h]
\centering
\caption{The values of $A(n,m)$ for $1\le n\le 15$ and $0\le m\le n-1$.}
\label{tab:A}
\begin{tabular}{LLLLLLLLLLLLLLLL}
\toprule
&\multicolumn{15}{C}{m}\\
\cmidrule(r){2-16}
n & 0 & 1 & 2 & 3 & 4 & 5 & 6 & 7 & 8 & 9 & 10 & 11 & 12 & 13 & 14\\ 
\midrule
1
& 1\\
2
& 1
& 1\\
3
& 2
& 1
& 1\\
4
& 3
& 3
& 1
& 1\\
5
& 5
& 5
& 4
& 1
& 1
\\
6
& 8
& 10
& 7
& 5
& 1
& 1\\
7
& 13
& 18
& 16
& 9
& 6
& 1
& 1\\ 
8
& 21
& 33
& 31
& 23
& 11
& 7
& 1
& 1\\ 
9
& 34
& 59
& 62
& 47
& 31
& 13
& 8
& 1
& 1\\
10
& 55
& 105
& 119
& 101
& 66
& 40
& 15
& 9
& 1
& 1\\ 
11
& 89
& 185
& 227
& 205
& 151
& 88
& 50
& 17
& 10
& 1
& 1\\ 
12
& 144
& 324
& 426
& 414
& 321
& 213
& 113
& 61
& 19
& 11
& 1
& 1\\
13
& 233
& 564
& 792
& 818
& 681
& 471
& 288
& 141
& 73
& 21
& 12
& 1
& 1\\ 
14
& 377
& 977
& 1458
& 1598
& 1406
& 1044
& 659
& 377
& 172
& 86
& 23
& 13
& 1
& 1\\ 
15
& 610
& 1685
& 2663
& 3082
& 2867
& 2240
& 1520
& 889
& 481
& 206
& 100
& 25
& 14
& 1
& 1\\ 
\bottomrule
\end{tabular}
\end{table}
We confirm his conjecture by establishing a strong unimodality.
\begin{theorem}\label{thm:unimodal}
For $n\ge 6$, the sequence $\{A(n,m)\}_{m=0}^{n-2}$ is strongly unimodal with mode $m^*$, i.e.,
\[
A(n,0)
< A(n,1)
<\dots
< A(n,m^*)
> A(n,\,m^*+1)
> A(n,\,m^*+2)
> \dots
> A(n,\,n-2),
\]
where $m^*=\floor{(n-1)/4}$.
\end{theorem}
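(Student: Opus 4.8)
The plan is to argue by induction on $n$, exploiting the polynomial recurrence $F_n = (1+z)F_{n-1} + (1-z)F_{n-2}$ from \cref{gf:A}, which translates into the coefficient recurrence \cref{rec:A.5}. The base cases $n = 6, 7, 8, 9$ (and possibly a few more) can be verified directly from \cref{tab:A}; one needs enough starting values to cover all four residue classes of $n \bmod 4$, since the location of the mode $m^* = \floor{(n-1)/4}$ behaves differently in each class. For the inductive step I would assume strong unimodality with the prescribed mode for $F_{n-1}$ and $F_{n-2}$ and try to deduce it for $F_n$. The monotone-increasing stretch $A(n,0) < A(n,1) < \dots < A(n,m^*)$ and the monotone-decreasing stretch $A(n,m^*) > A(n,m^*+1) > \dots > A(n,n-2)$ should be handled separately, and within each stretch the sign of $A(n,m) - A(n,m-1)$ should be extracted by summing the recurrence: from \cref{rec:A.5},
\[
A(n,m) - A(n,m-1) = \bigl(A(n-1,m) - A(n-1,m-1)\bigr) + \bigl(A(n-2,m) - A(n-2,m-2)\bigr),
\]
after regrouping, so the difference at level $n$ is controlled by differences at levels $n-1$ and $n-2$. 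The trouble is that the coefficient $1 - z$ is \emph{not} nonnegative, so this telescoping mixes a positive contribution with a negative one, and a naive induction on the raw differences $A(n,m) - A(n,m-1)$ will not close.

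This is where the two $4$-term recurrences produced by Zeilberger's algorithm (\cref{lem:rec:dd} and \cref{lem:rec:G}) enter, and I expect them to be the technical heart of the argument. Presumably \cref{lem:rec:dd} is a recurrence satisfied by the difference sequence $d(n,m) := A(n,m) - A(n,m-1)$ itself — a $4$-term relation expressing $d(n,m)$ in terms of $d$ at smaller arguments with coefficients that, unlike $1-z$, have a definite sign pattern in the relevant range of $m$ — so that positivity (for $m \le m^*$) and negativity (for $m > m^*$) of $d(n,m)$ propagate cleanly. The second recurrence \cref{lem:rec:G} is likely tailored to the boundary behavior near the mode: the delicate points are the three inequalities $A(n,m^*-1) < A(n,m^*)$, $A(n,m^*) > A(n,m^*+1)$, and the transition as $n \rightsquigarrow n+1$ shifts $m^*$ by $0$ or $1$ depending on the residue $n \bmod 4$. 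Near $m^*$ the difference $d(n,m)$ is small (the sequence is nearly flat at its peak — compare e.g.\ $A(14,3)=1598$ versus $A(14,4)=1406$, or the near-ties lower down), so crude bounds are useless and one needs an exact auxiliary quantity, call it $G(n,m)$, whose sign is tracked by \cref{lem:rec:G} and which certifies the strict inequality at and immediately around the mode.

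Concretely, the steps I would carry out are: (i) fix notation $d(n,m) = A(n,m) - A(n,m-1)$ and state the two auxiliary recurrences, checking they hold by plugging in the binomial formula \cref{def:A} — equivalently, by running Zeilberger's algorithm on the summand $\binom{k}{m}\binom{n-1-k}{k-m}$ to certify them, which also validates them rigorously; (ii) establish the base cases from \cref{tab:A} for all residues mod $4$; (iii) in the inductive step, split into the four cases according to $n \bmod 4$, and in each case use \cref{lem:rec:dd} to show $d(n,m) > 0$ for $1 \le m \le m^*$ and $d(n,m) < 0$ for $m^* < m \le n-2$, feeding in the inductive hypotheses on $d(n-1,\cdot)$, $d(n-2,\cdot)$ (and $d(n-3,\cdot)$ if the recurrence reaches that far); (iv) handle the mode itself and the one or two indices adjacent to it with \cref{lem:rec:G}, since that is exactly where the sign of the coefficients in \cref{lem:rec:dd} is indeterminate or where the induction hypothesis gives an inequality in the wrong direction. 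The main obstacle, as flagged, is step (iv): pinning down the strict inequalities in the narrow window around $m^*$, where the sequence is almost constant, requires the precise form of the Zeilberger recurrence \cref{lem:rec:G} rather than any soft monotonicity argument, and getting the boundary bookkeeping right across the four residue classes (so that the mode genuinely lands at $\floor{(n-1)/4}$ and not one step to either side) is where the care is needed.
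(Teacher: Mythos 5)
Your overall architecture---pass to the difference sequence $\Delta(n,m)=A(n,m)-A(n,m-1)$, drive an induction with the Zeilberger recurrence of \cref{lem:rec:dd}, and treat the window around the mode with the second recurrence of \cref{lem:rec:G}---is the same as the paper's. But the two places where you hope the argument will ``propagate cleanly'' or be ``certified'' are precisely where the real work lies, and as described your steps (iii) and (iv) would not close. The coefficients $d_j(n,m)$ in \cref{lem:rec:dd} do not have a favorable sign pattern in the relevant range, so knowing only the signs of $\Delta(n-1,m)$, $\Delta(n-2,m)$, $\Delta(n-3,m)$ does not determine the sign of $\Delta(n,m)$. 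The paper's induction for $m+2\le n\le 4m$ (\cref{prop:n<=4m}) works not with raw signs but with the auxiliary combinations $X(n,m)=-(n-m)\Delta(n,m)+2m\Delta(n-2,\,m)$ and $Y(n,m)=-2(n-m)\Delta(n,m)+n\Delta(n-2,\,m)$, exploiting the identity $(n-1-4m)X(n,m)=(n-4m)Y(n-1,\,m)$; and the transition at $n\in\{4m+1,4m+2\}$ requires genuinely quantitative information, namely the ratio bounds $1/m<\Delta(n-1,\,m)/\Delta(n,m)<7/9$ of \cref{lem:7/9} and $\Delta(4m-2,\,m)/\Delta(4m-1,\,m)<10/9$ of \cref{lem:10/9}, whose proofs are the technical heart of the paper and which nothing in your sketch supplies or anticipates.

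Moreover, the obstacle at the mode is sharper than ``the coefficients are indeterminate there'': at $n=4m+1$ the recurrence of \cref{lem:rec:dd} degenerates to $3\Delta(4m,m)=2\Delta(4m-2,\,m)$ (see \cref{dd:4m}) and so carries no information whatsoever about $\Delta(4m+1,\,m)$. That is exactly why the paper introduces $G(n,m)=\Delta(n,m)+\Delta(n-1,\,m)$ and a separate Zeilberger recurrence for $G(4m+1,\,m)$; but even then \cref{lem:rec:G} alone does not ``track the sign'' of $G$---positivity of $G(4m+1,\,m)$ is extracted only by combining that recurrence with \cref{dd:4m} and the $7/9$ and $10/9$ bounds, after which $\Delta(4m+1,\,m)>-\Delta(4m,m)>0$ and the case $n=4m+2$ follows from \cref{lem:rec:dd}. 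Two smaller discrepancies: the paper never splits into residue classes of $n$ modulo $4$ (it fixes $m$ and argues along $n$ over the regions $n\le 4m$, $n\in\{4m+1,4m+2\}$, $n\ge 4m+3$), and the range $n\ge 4m+3$ is handled not by \cref{lem:rec:dd} at all but by a short double induction on $m$ and $n$ using the original recurrence \cref{rec:A.5} (\cref{prop:4m+3}). So your plan points in the right direction, but the missing quantitative ratio estimates and the degeneracy of the difference recurrence exactly at the mode are genuine gaps, not bookkeeping.
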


The sequence $\{A(n,m)\}_m$ for $n\ge 3$ is not log-concave, since
\[
A(n,\,n-2)^2-A(n,\,n-3)A(n,\,n-1)=2-n<0.
\]
More evidence for the non-log-concavity can be seen from \cref{Delta:m+2,Delta:m+3,Delta:m+4,Delta:m+5}.

The other result of this paper is the zero distribution of the generating functions $F_n(z)$.
\begin{theorem}\label{thm:rg}
The zeros of all generating functions $F_n(z)$ are densely distributed
on the left half part of the circle 
\[
\abs{z-1}=2.
\]
\end{theorem}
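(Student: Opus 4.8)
The plan is to exploit the recurrence \cref{gf:A}, which expresses $F_n(z)=(1+z)F_{n-1}(z)+(1-z)F_{n-2}(z)$ — a recurrence of type $(1,1)$ in the sense of Wang and Zhang. The natural first move is to pass to the $z$-plane via a M\"obius substitution that straightens the target curve. Since the claimed locus is the circle $\abs{z-1}=2$, I would set $z = 1 + 2w$ or, better, use an involutive transformation $w = (z-1)/(z+1)$ (or a variant adapted to the fixed points of the map $z\mapsto$ the companion quadratic), so that the circle $\abs{z-1}=2$ is carried to a line or to the unit circle, and the ``left half part'' is carried to a half-line or a semicircular arc. Under $F_n(z)=(z+1)^{?}\,G_n(w)$ for an appropriate power, the recurrence \cref{gf:A} becomes a three-term recurrence with \emph{constant} (or simpler polynomial) coefficients in $w$, whose characteristic equation can be solved explicitly.

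With an explicit solution $G_n(w) = \alpha\,r_1(w)^n + \beta\, r_2(w)^n$ in hand, a zero of $F_n$ corresponds to a value of $w$ with $\abs{r_1(w)} = \abs{r_2(w)}$ together with a phase condition $(r_1(w)/r_2(w))^n = -\beta/\alpha$. First I would show that the set $\{w : \abs{r_1(w)}=\abs{r_2(w)}\}$ is exactly the image of the relevant arc of $\abs{z-1}=2$ — this is the ``root geometry'' half of the statement and should follow from analyzing the discriminant $(z+1)^2 + 4(1-z) = z^2 - 2z + 5 = (z-1)^2 + 4$, whose roots are $z = 1 \pm 2i$, i.e.\ the endpoints of the left half of $\abs{z-1}=2$. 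The condition $\abs{r_1}=\abs{r_2}$ is equivalent to $r_1/r_2$ lying on the unit circle, and a short computation should confirm this happens precisely on that semicircular arc; the restriction to the \emph{left} half (rather than the full circle) will come from checking which branch keeps both $\abs{r_i}$ from dominating, equivalently from the sign of $\Re$ of some auxiliary quantity. Then I would invoke, verbatim, Wang and Zhang's characterization for type-$(1,1)$ sequences to conclude that the zeros are not merely on this arc but \emph{dense} in it: their theorem says exactly that for a non-degenerate recurrence of this type the union $\bigcup_n Z(F_n)$ is dense in the limiting curve, so once I have matched our situation to their hypotheses (checking non-degeneracy: $r_1\not\equiv r_2$, and the initial values $F_1, F_2$ do not make $\alpha$ or $\beta$ vanish identically) the density is immediate.

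The main obstacle, I expect, is bookkeeping around the M\"obius transformation and the branch of the square root: getting the correct conjugating factor $(z+1)^k$ so that the transformed recurrence has genuinely constant coefficients, and then correctly identifying which of the two arcs of $\abs{z-1}=2$ is selected. A secondary subtlety is that $F_n$ has degree growing with $n$ but the claimed locus is a fixed curve; this is consistent only because the zeros \emph{accumulate} on the curve rather than lying exactly on it for finite $n$ — so I must be careful to state the density conclusion (as the theorem does) and not over-claim exact containment. Verifying the precise non-degeneracy conditions of the Wang--Zhang theorem against the explicit $F_1(z)=1$, $F_2(z)=1+z$ is routine but must be done to license the black-box application. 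Once the transformation is pinned down and the discriminant locus $z^2-2z+5=0$ is connected to the endpoints $1\pm 2i$, the rest is a direct citation.
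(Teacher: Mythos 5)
The second half of your plan---normalizing so that Wang--Zhang's type-$(1,1)$ theorem applies (the paper does this via $W_n(z)=F_{n+1}(z-1)$, giving $a=1$, $b=0$, $c=-1$, $d=2$ and $\Delta_\Delta=-1<0$) and reading off that every point of the left half of $\abs{z-1}=2$ is a limit of zeros---is essentially the paper's \cref{prop:limit}, and that part of your argument would go through once the non-degeneracy checks are done. The genuine gap is the other half of the statement. The theorem, as the paper understands it (see the abstract, and the fact that the proof combines \cref{prop:zeros,prop:limit}), asserts that the zeros actually \emph{lie on} the half circle and are dense there. Your proposal not only omits the containment but explicitly disclaims it (``the zeros accumulate on the curve rather than lying exactly on it for finite $n$''), and that disclaimer is false for this family: by \cref{prop:zeros}, every zero of every $F_n$ with $n\ge 2$ lies exactly on the left half of $\abs{z-1}=2$. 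The Wang--Zhang theorem describes only limits of zeros, so it cannot supply this; the paper proves it by a separate argument, transforming $F_{n+1}$ through $z\mapsto \frac{2(z+i)}{iz+1}+1$ with the multiplier $e^{in\pi/4}(z-i)^n 2^{-n/2}$, so that \cref{gf:A} becomes the real recurrence \cref{pf:rec:W}, and then showing the resulting palindromic polynomials are real-rooted with negative, interlacing zeros. Some argument of this kind (circle-to-line M\"obius conjugation plus real-rootedness, or any other device pinning the zeros to the circle for each finite $n$) is a missing, necessary ingredient in your plan.

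A related internal error: your claim that a zero of $F_n$ ``corresponds to a value of $w$ with $\abs{r_1(w)}=\abs{r_2(w)}$ together with a phase condition'' is not correct as stated. Writing $F_n=\alpha r_1^{\,n}+\beta r_2^{\,n}$, the zero condition is $(r_1/r_2)^n=-\beta/\alpha$, which forces $\abs{r_1/r_2}^n=\abs{\beta/\alpha}$ but not $\abs{r_1}=\abs{r_2}$; the equimodular locus emerges only asymptotically (this is precisely the Beraha--Kahane--Weiss content formalized in \cref{thm:lz}), and the claim also contradicts your own later caveat. Your discriminant computation $(1+z)^2+4(1-z)=(z-1)^2+4$ with roots $1\pm 2i$ is right and does identify the endpoints, but selecting the \emph{left} half of the circle is exactly the extra condition built into \cref{thm:lz} (the arc through $-b/a$ joining $x_\Delta^{\pm}$), so the ``sign of $\Re$ of some auxiliary quantity'' you defer to is already part of the black box rather than something your transformation needs to produce; the real work your write-up leaves undone is the exact containment above.
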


See~\cref{fig:limit} for illustration.
We will show \cref{thm:unimodal,thm:rg} respectively
in \cref{sec:unimodal,sec:zeros}.
\usetikzlibrary {arrows.meta}
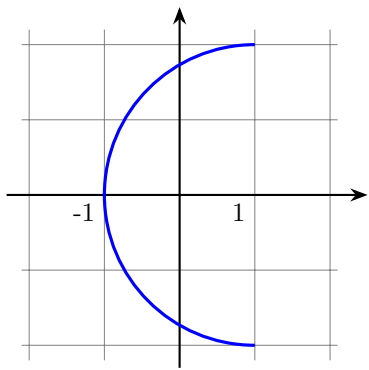
\begin{figure}[h]
\begin{tikzpicture}[>=Stealth]
\draw[step=1cm, gray, very thin] (-0.1,-2.2) grid (4.1, 2.2); 
\draw[thick, ->] (-0.3, 0) -- (4.5, 0);
\draw[thick, ->] (2, -2.3) -- (2, 2.5);
\draw (1, 0) node[anchor=north east] {-1};
\draw (3, 0) node[anchor=north east] {1};
\draw[very thick, color=blue] (3,2) arc [start angle=90, end angle=270, radius=2];
\end{tikzpicture}
\caption{The zeros of all generating functions $F_n(z)$ are densely distributed on the left half part of the circle $|z-1|=2$.}
\label{fig:limit}
\end{figure}

\section{The quarter unimodality --- A proof for \cref{thm:unimodal}}\label{sec:unimodal}

Let $n\ge 6$ and 
\[
\Delta(n,m)=A(n,m)-A(n,m-1).
\]
The values of $\Delta(n,m)$ for small $n$ and $m$ can be found in \cref{tab:dd}.
\begin{table}[h]
\centering
\caption{The values of $\Delta(n,m)$ for $0\le m\le 14$ and $1\le n\le m+1$.}
\label{tab:dd}
\begin{tabular}{LLLLLLLLLLLLLLLL}
\toprule
&\multicolumn{15}{C}{m}\\
\cmidrule(r){2-16}
n & 0 & 1 & 2 & 3 & 4 & 5 & 6 & 7 & 8 & 9 & 10 & 11 & 12 & 13 & 14\\ 
\midrule
1
& 1\\
2
& 1
& 0\\
3
& 2
& -1
& 0\\
4
& 3
& 0
& -2
& 0\\
5
& 5
& 0
& -1
& -3
& 0
\\
6
& 8
& 2
& -3
& -2
& -4
& 0\\
7
& 13
& 5
& -2
& -7
& -3
& -5
& 0\\ 
8
& 21
& 12
& -2
& -8
& -12
& -4
& -6
& 0\\ 
9
& 34
& 25
& 3
& -15
& -16
& -18
& -5
& -7
& 0\\
10
& 55
& 50
& 14
& -18
& -35
& -26
& -25
& -6
& -8
& 0\\ 
11
& 89
& 96
& 42
& -22
& -54
& -63
& -38
& -33
& -7
& -9
& 0\\ 
12
& 144
& 180
& 102
& -12
& -93
& -108
& -100
& -52
& -42
& -8
& -10
& 0\\
13
& 233
& 331
& 228
& 26
& -137
& -210
& -183
& -147
& -68
& -52
& -9
& -11
& 0\\ 
14
& 377
& 600
& 481
& 140
& -192
& -362
& -385
& -282
& -205
& -86
& -63
& -10
& -12
& 0\\ 
15
& 610
& 1075
& 978
& 419
& -215
& -627
& -720
& -631
& -408
& -275
& -106
& -75
& -11
& -13
& 0\\ 
\bottomrule
\end{tabular}
\end{table}
Using \cref{def:A},
one may calculate by bootstrap that for $m\ge 1$,
\begin{align}
\label{Delta:m+1}
\Delta(m+1,\,m)&=0,\\
\label{Delta:m+2}
\Delta(m+2,\,m)&=-m,\\
\label{Delta:m+3}
\Delta(m+3,\,m)&=-(m-1),\\
\label{Delta:m+4}
\Delta(m+4,\,m)&=-(m+4)(m-1)/2,\\
\label{Delta:m+5}
\Delta(m+5,\,m)&=-(m^2+m-4).
\end{align}
Since 
\[
m\le m^*
\iff	
n\ge 4m+1,
\]
\cref{thm:unimodal} can be restated as
\[
\begin{cases}
\Delta(n,m)<0,&\text{if $m+2\le n\le 4m$,}\\ 
\Delta(n,m)>0,&\text{if $n\ge 4m+1$}.
\end{cases}
\]
We will handle the cases $n\le 4m$, $n\in\{4m+1,\,4m+2\}$,
and $n\ge 4m+3$, respectively in \cref{prop:n<=4m,prop:4m+1:4m+2,prop:4m+3}.

A key tool to establish \cref{prop:n<=4m,prop:4m+1:4m+2} is
a recurrence for the following sequence $\{\Delta(n,m)\}_n$, 
which is found by using Zeilberger's algorithm straightforwardly.

\begin{lemma}\label{lem:rec:dd}
For any integers $n$ and $m$,
\[
\sum_{j=0}^3 d_j(n,m)\Delta(n-j,m)=0,
\]
where
\begin{align*}
d_0(n,m)&=-(n-m)(n-4m-1),\\
d_1(n,m)&=2(n-m-1)(n-4m),\\
d_2(n,m)&=2m(n-4m-1),\\
d_3(n,m)&=-(n-1)(n-4m).
\end{align*}
\end{lemma}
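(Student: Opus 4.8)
The plan is to derive the four-term recurrence directly from the closed form \cref{def:A} by a single application of Zeilberger's (creative telescoping) algorithm, and then to verify the resulting certificate identity by elementary manipulation of binomial coefficients. First I would feed the hypergeometric summand
\[
T(n,m,k)=\binom{k}{m}\binom{n-1-k}{k-m}-\binom{k}{m-1}\binom{n-1-k}{k-m+1},
\]
which represents $\Delta(n,m)$ by \cref{def:A}, into Zeilberger's algorithm with $n$ as the free recurrence variable and $m$ as a parameter. The algorithm returns polynomials $d_0,d_1,d_2,d_3$ in $n$ and $m$, together with a rational certificate $R(n,m,k)$, such that
\[
\sum_{j=0}^{3}d_j(n,m)\,T(n-j,m,k)=G(n,m,k+1)-G(n,m,k),
\qquad G(n,m,k)=R(n,m,k)\,T(n,m,k).
\]
Summing this telescoping identity over all integers $k$ kills the right-hand side (the summand $T(n,m,k)$ has finite support in $k$ for fixed $n,m$, being a product of binomial coefficients that vanish outside a bounded range), and yields exactly $\sum_{j}d_j(n,m)\Delta(n-j,m)=0$, which is the assertion of the lemma after one records the coefficients $d_j$ as displayed.

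Second, since a machine-produced certificate is not a human-checkable proof on its own, I would then \emph{verify} the lemma independently of the software: substitute the stated polynomials $d_0,\dots,d_3$ and the output certificate $R$ into the telescoping equation and reduce both sides to a common hypergeometric form. Concretely, divide through by $T(n,m,k)$ (or by a suitable reference term such as $\binom{k}{m}\binom{n-1-k}{k-m}$) so that every shifted summand becomes an explicit rational function of $n,m,k$ — the ratios $T(n-j,m,k)/T(n,m,k)$ are products of falling/rising factorials — and check that the resulting rational-function identity holds identically. This is a finite, purely algebraic verification: clear denominators and confirm that two polynomials in $\mathbb Z[n,m,k]$ coincide.

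The main obstacle, and essentially the only nontrivial point, is that $\Delta(n,m)$ is a \emph{difference} of two hypergeometric terms rather than a single one, so strictly speaking Zeilberger's algorithm applies to each piece separately and one must combine the two recurrences. The two summands $\binom{k}{m}\binom{n-1-k}{k-m}$ and $\binom{k}{m-1}\binom{n-1-k}{k-m+1}$ are similar hypergeometric terms in $k$ (their ratio is a rational function of $k$), so they satisfy the \emph{same} minimal $n$-recurrence; hence the operator $\sum_j d_j(n,m)E_n^{-j}$ annihilates $\Delta(n,m)=A(n,m)-A(n,m-1)$ as well. I would make this precise by checking that the certificate found for $A(n,m)$ also telescopes the shifted term $A(n,m-1)$ with the \emph{same} $d_j$ (equivalently, run Zeilberger on the combined summand $T$ directly and confirm the order-$3$ output). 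A secondary, routine check is the support/boundary issue: one must confirm that $G(n,m,k)\to 0$ as $k\to\pm\infty$ so that the telescoped sum genuinely vanishes; this is immediate because $R(n,m,k)$ is rational in $k$ while $T(n,m,k)$ has compact support in $k$ for each fixed $(n,m)$. Once these points are settled, the lemma follows, and the explicit coefficients $d_0,\dots,d_3$ are read off from the algorithm's output and recorded as in the statement.
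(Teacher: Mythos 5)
Your proposal is correct and follows essentially the same route as the paper: the paper also obtains the recurrence by applying Zeilberger's algorithm to the summand of $\Delta(n,m)$ coming from \cref{def:A} and then notes that the verification of the resulting (certificate) identity is routine. Your additional remarks on combining the two similar hypergeometric terms and on the vanishing of the boundary terms are exactly the ``routine'' details the paper leaves implicit, so nothing is missing.
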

\begin{proof}
By \cref{def:A},
\[
\Delta(n,m)
=\sum_k\brk[s]4{
\binom{k}{m}\binom{n-1-k}{k-m}
-\binom{k}{m-1}\binom{n-1-k}{k-m+1}}.
\]
It is routine to verify that it satisfies the recurrence.
This completes the proof.
\end{proof}

We remark that the recurrence in \cref{lem:rec:dd} 
with any finite number of initial values of $A(n,m)$
is not enough to generate the whole set of numbers $\Delta(n,m)$. This is because
the recurrence for $n=4m+1$ 
degenerates to the identity 
\begin{equation}\label{dd:4m}
3\Delta(4m,m)=2\Delta(4m-2,\,m),
\end{equation}
which does not provide any information to determine the value of $\Delta(4m+1,\,m)$.

\begin{proposition}\label{prop:n<=4m}
Let $n\ge 6$. 
Then $\Delta(n,m)<0$ for $m+2\le n\le 4m$.
\end{proposition}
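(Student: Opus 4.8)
The plan is to fix $m$ and run an induction on $n$ across the interval $m+2\le n\le 4m$, the engine being \cref{lem:rec:dd} together with the degeneracy \cref{dd:4m}. Note first that the hypothesis $n\ge 6$ forces $m\ge 2$, since the interval $[m+2,4m]$ is empty otherwise; for each $m\ge 2$ I will in fact prove $\Delta(n,m)<0$ on all of $[m+2,4m]$.

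I would begin by observing that a bare sign-chase on \cref{lem:rec:dd} does not close. Writing it as $d_0(n,m)\Delta(n,m)=-\sum_{j=1}^{3}d_j(n,m)\Delta(n-j,m)$ and reading off the signs on $m+2\le n\le 4m-1$ (where $n-m>0$ and $n-4m<0$), the contributions of $\Delta(n-1,m)$ and $\Delta(n-2,m)$ keep $\Delta(n,m)$ negative, but $d_3(n,m)=-(n-1)(n-4m)$ is positive, so the $\Delta(n-3,m)$ term pushes the wrong way. The remedy is to exploit the special shape of the $d_j$: both $d_0$ and $d_2$ carry the factor $n-4m-1$ and both $d_1$ and $d_3$ carry the factor $n-4m$. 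Setting $t=4m-n$ and grouping accordingly, \cref{lem:rec:dd} turns into
\[
(t+1)\,P(n)+t\,Q(n)=0,
\]
where $P(n)=2m\,\Delta(n-2,m)-(n-m)\,\Delta(n,m)$ and $Q(n)=2(n-m-1)\,\Delta(n-1,m)-(n-1)\,\Delta(n-3,m)$. Since $Q(n)$ and $P(n-1)$ differ only in their $\Delta(n-3,m)$ terms, one checks the identity $Q(n)+2P(n-1)=(t+1)\,\Delta(n-3,m)$, and substituting it back collapses the three-term recurrence into a two-term recurrence for the auxiliary sequence $P$:
\[
(t+1)\,P(n)=2t\,P(n-1)-t(t+1)\,\Delta(n-3,m).
\]

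With this in hand the induction is straightforward. For the base, \cref{Delta:m+2,Delta:m+3,Delta:m+4,Delta:m+5} give $\Delta(n,m)<0$ for $m+2\le n\le m+5$, and feeding \cref{Delta:m+1,Delta:m+2,Delta:m+3,Delta:m+4,Delta:m+5} into the definition of $P$ yields $P(m+3)=3(m-1)$, $P(m+4)=6m-8$ and $P(m+5)=3m^2+7m-20$, all nonnegative for $m\ge 2$. In the inductive step, with $m+6\le n\le 4m-1$ and hence $t\ge 1$, the right-hand side of the two-term recurrence is positive because $P(n-1)\ge 0$ and $\Delta(n-3,m)<0$ by induction; therefore $P(n)>0$. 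Since $n-m>0$, it follows that
\[
\Delta(n,m)=\frac{2m\,\Delta(n-2,m)-P(n)}{n-m}<0,
\]
using $\Delta(n-2,m)<0$ from the induction and $P(n)>0$. Finally the endpoint $n=4m$ is handled separately via \cref{dd:4m}, which gives $\Delta(4m,m)=\tfrac23\,\Delta(4m-2,m)<0$.

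The one genuinely non-routine step is guessing the combination $P(n)$ — equivalently, spotting the factorizations of the $d_j(n,m)$ that reduce \cref{lem:rec:dd} to a two-term recurrence; after that, the sign bookkeeping, the three evaluations of $P$ at the base, and the special case $n=4m$ are all mechanical. Two small points to keep track of: $\Delta(m+1,m)=0$ rather than negative, which is why the induction starts at $n=m+2$ and $P$ is controlled only from $n=m+3$ onward; and for $m=2$ the range $m+6\le n\le 4m-1$ is empty, so that case reduces to the base values together with \cref{dd:4m}.
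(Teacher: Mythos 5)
Your proof is correct and takes essentially the paper's route: your $P(n)$ is exactly the paper's $X(n,m)$, your grouping $(t+1)P(n)+tQ(n)=0$ is precisely \cref{XY} (with $Q(n)=-Y(n-1,\,m)$), and your elimination identity $Q(n)+2P(n-1)=(t+1)\Delta(n-3,\,m)$ is the relation the paper uses implicitly when it recasts the ratio inequality as $Y(n,m)>0$. The only difference is bookkeeping: you substitute that relation back to get a two-term recurrence in $P$ alone and induct on $P>0$ together with $\Delta<0$ (plus the endpoint via \cref{dd:4m}), whereas the paper runs the simultaneous induction on $X>0$, $Y>0$ and $\Delta<0$.
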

\begin{proof}
For convenience, we rewrite \cref{lem:rec:dd} as
\begin{equation}\label{XY}
(n-1-4m)X(n,m)=(n-4m)Y(n-1,\,m),
\end{equation}
where 
\begin{align*}
X(n,m)&=-(n-m)\Delta(n,m)+2m\Delta(n-2,\,m)\quad\text{and}\\
Y(n,m)&=-2(n-m)\Delta(n,m)+n\Delta(n-2,\,m).
\end{align*} 
We shall show the following inequalities
\begin{align}
\label[ineq]{dsr:X}
X(n,m)&>0,
\quad\text{if $m+3\le n\le 4m-1$,}\\
\label[ineq]{dsr:Y}
Y(n,m)&>0,
\quad\text{if $m+3\le n\le 4m-1$, and}\\
\label[ineq]{dd<0}
\Delta(n,m)
&<0,\quad\text{if $m+2\le n\le 4m$.}
\end{align}
By \cref{Delta:m+1,Delta:m+3}, one may calculate that
\[
X(m+3,\,m)=3(m-1)>0
\quad\text{and}\quad
Y(m+3,\,m)=6(m-1)>0.
\]
From \cref{Delta:m+2,Delta:m+3}, we see that
\[
\Delta(m+2,\,m)=-m<0
\quad\text{and}\quad
\Delta(m+3,\,m)=1-m<0.
\]
Below we can suppose that $n\ge m+4$.
In view of \cref{dd:4m}, 
the sign of $\Delta(4m,m)$ is same to that of $\Delta(4m-2,\,m)$,
and we can restrict ourselves to $n\le 4m-1$.

Let $m+4\le n\le 4m-1$. We proceed by induction on $n$.
By the induction hypothesis of \cref{dsr:Y},
\[
Y(n-1,\,m)>0,
\]
which implies \cref{dsr:X} by \cref{XY}.
By the induction hypothesis of \cref{dd<0},
\begin{equation}\label[ineq]{pf:dd.n-2<0}
\Delta(n-2,\,m)<0.
\end{equation}
Therefore, by \cref{dsr:X}, we can infer that
\begin{equation}\label[ineq]{dd.n/dd.n-2}
\frac{\Delta(n,m)}{\Delta(n-2,\,m)}
>\frac{2m}{n-m}
>\frac{n}{2(n-m)},
\end{equation}
which can be recast as \cref{dsr:Y}.
Since the rightmost side of \cref{dd.n/dd.n-2} is positive, 
by \cref{pf:dd.n-2<0}, we obtain \cref{dd<0}.
This completes the proof.
\end{proof}

A key to establish \cref{prop:4m+1:4m+2} is \cref{lem:7/9},
in which the lower bound $1/m$ is technically set for
making the induction proof for the upper bound $7/9$ valid.

\begin{lemma}\label{lem:7/9}
Let $m\ge 5$. Then
\[
\frac{1}{m}<\frac{\Delta(n-1,\,m)}{\Delta(n,m)}<\frac{7}{9},
\quad\text{if $m+4\le n\le 4m-2$}.
\]
\end{lemma}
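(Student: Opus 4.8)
\textbf{Proof proposal for \cref{lem:7/9}.}
The plan is to mimic the inductive scheme of \cref{prop:n<=4m}: run the induction on $n$ using the $3$-term (in $n$) recurrence of \cref{lem:rec:dd}, recast in the form \cref{XY}. Write $r(n,m)=\Delta(n-1,m)/\Delta(n,m)$ for the ratio we must control. Since $m+4\le n\le 4m-2$ forces $m\ge 5$, both $\Delta(n,m)$ and $\Delta(n-1,m)$ are negative by \cref{prop:n<=4m}, so $r(n,m)>0$ automatically, and the content is the two-sided bound $1/m<r(n,m)<7/9$. First I would rewrite \cref{XY} to express $\Delta(n,m)$ in terms of $\Delta(n-1,m)$ and $\Delta(n-2,m)$ (the recurrence is genuinely three-term in $n$), divide through by $\Delta(n-1,m)$, and thereby obtain a fractional-linear expression for $r(n,m)$ in terms of $r(n-1,m)$ with coefficients that are explicit rational functions of $n$ and $m$. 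The map $t\mapsto (\text{linear in }t)/(\text{linear in }t)$ is monotone on the relevant interval, so feeding in $1/m<r(n-1,m)<7/9$ yields bounds on $r(n,m)$; the job is to check these bounds are again inside $(1/m,\,7/9)$ for every $n$ in the stated range.

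The base case is $n=m+4$: here I would combine \cref{Delta:m+4,Delta:m+3} to get
\[
r(m+4,\,m)=\frac{\Delta(m+3,\,m)}{\Delta(m+4,\,m)}=\frac{m-1}{(m+4)(m-1)/2}=\frac{2}{m+4},
\]
and verify $1/m<2/(m+4)<7/9$ for $m\ge 5$, which is elementary. One also needs the induction hypotheses of \cref{prop:n<=4m} (that $\Delta(n-1,m),\Delta(n-2,m)<0$) to keep the sign bookkeeping under control when dividing; these are available since the range $m+4\le n\le 4m-2$ sits strictly inside $m+2\le n\le 4m$.

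For the inductive step, after producing $r(n,m)=\varphi\bigl(r(n-1,m)\bigr)$ with $\varphi$ fractional-linear, I expect the upper bound to be the delicate one — this is exactly the point flagged in the excerpt, that the artificial lower bound $1/m$ is inserted precisely so that the propagation of the upper bound $7/9$ closes. Concretely, the worst case for the upper bound on $r(n,m)$ is driven by the \emph{largest} admissible value of $r(n-1,m)$ (namely $7/9$) and the \emph{smallest} admissible value (namely $1/m$) depending on whether $\varphi$ is increasing or decreasing in $t$; one must determine the monotonicity of $\varphi$ from the signs of its coefficients (polynomials in $n,m$ over the triangle $m+4\le n\le 4m-2$, $m\ge 5$) and then reduce $\varphi(7/9)<7/9$ and $\varphi(1/m)<7/9$ — and symmetrically the two inequalities for the lower bound — to polynomial inequalities in $n$ and $m$. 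These final polynomial inequalities should be provable by writing $n=m+4+s$ with $0\le s\le 3m-6$ and checking nonnegativity of the resulting polynomials in $m$ and $s$ term by term, or by a short case split on $s$. The main obstacle is thus not conceptual but the verification that the fractional-linear recursion is genuinely contractive into $(1/m,7/9)$ across the whole triangular range — in particular handling the $m$-dependence of the lower endpoint $1/m$, which shifts with $m$ while $7/9$ stays fixed, so one must check the induction does not leak out the bottom for small $m$ (this is why the hypothesis is $m\ge5$) nor out the top near $n=4m-2$.
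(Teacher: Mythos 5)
There is a genuine gap, and it sits at the very foundation of your plan: the recurrence of \cref{lem:rec:dd} is \emph{not} three-term in $n$. It runs over $j=0,1,2,3$ and therefore couples $\Delta(n,m)$, $\Delta(n-1,m)$, $\Delta(n-2,m)$ \emph{and} $\Delta(n-3,m)$; the form \cref{XY} is merely a regrouping of the same four terms ($X(n,m)$ uses $\Delta(n,m),\Delta(n-2,m)$, while $Y(n-1,m)$ uses $\Delta(n-1,m),\Delta(n-3,m)$). Consequently, dividing by $\Delta(n-1,m)$ gives
\[
\frac{d_0(n,m)}{r(n,m)}+d_1(n,m)+d_2(n,m)\,r(n-1,m)+d_3(n,m)\,r(n-1,m)\,r(n-2,m)=0,
\]
so $r(n,m)$ depends on \emph{two} previous ratios, not one. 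The one-step fractional-linear map $r(n,m)=\varphi\bigl(r(n-1,m)\bigr)$ that your whole inductive mechanism (monotonicity of $\varphi$, contraction into $(1/m,7/9)$) rests on simply does not exist. This also means a single base case $n=m+4$ is insufficient: you need two consecutive base cases (the paper checks the ratios at $(m+3,m+4)$ and $(m+4,m+5)$ from \cref{Delta:m+3,Delta:m+4,Delta:m+5}) so that the step can invoke the hypothesis at both $n-1$ and $n-2$.

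Your overall spirit — propagate the two-sided ratio bound through the recurrence by induction on $n$ — is the same as the paper's, but the execution has to absorb the fourth term, and that is exactly where the real work lies. The paper does this by sandwiching $d_2\Delta(n-2)+d_3\Delta(n-3)$ between $s_2(n,m)\Delta(n-2)$ and $t_2(n,m)\Delta(n-2)$ using the hypothesis at level $n-2$ together with the sign of $d_3$, then feeding the hypothesis at level $n-1$ into the result; this forces a case split on the sign of $s_2(n,m)$ (the coefficient multiplying $\beta$ versus $\alpha$ in $s_1$), a chain of explicit quadratics $t_1,t_0,s_1,s_0$ whose signs must be verified on the triangle $m+4\le n\le 4m-2$, and a restriction to $m\ge 10$ (with $5\le m\le 9$ checked by computer) because $s_0(4m-2,m)<0$ fails otherwise. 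None of this bookkeeping is reachable from a one-variable M\"obius iteration, so as written the proposal does not close; to repair it you would have to carry a two-level induction hypothesis and essentially reconstruct the paper's estimates.
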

\begin{proof}
For $5\le m\le 9$, it is routine to check its truth with the aid of mathematical software.
Let 
\[
m\ge 10,\quad
\alpha=1/m,\quad\text{and}\quad
\beta=7/9.
\] 
The bound $m\ge 10$ will be used to show 
$s_0(4m-2,\,m)<0$ at the end of this proof.
By \cref{Delta:m+3,Delta:m+4,Delta:m+5}, 
it is routine to verify that 
\begin{align*}
\frac{\Delta(m+3,\,m)}{\Delta(m+4,\,m)}
&=\frac{-(m-1)}{-(m+4)(m-1)/2}
\in(\alpha,\beta),\quad\text{and}\\
\frac{\Delta(m+4,\,m)}{\Delta(m+5,\,m)}
&=\frac{-(m+4)(m-1)/2}{-(m^2+m-4)}
\in(\alpha,\beta).
\end{align*}
Proceeding by induction on $n$,
we suppose that $n\le 4m-1$. Then
\begin{align}
\label[ineq]{pf:sign:d1}
d_1(n,m)&=2(n-m-1)(n-4m)<0,\quad\text{and}\\
\label[ineq]{pf:sign:d3}
d_3(n,m)&=-(n-1)(n-4m)>0.
\end{align}
By \cref{prop:n<=4m},
\begin{align}
\label[ineq]{pf:dd.n-2<0}
\Delta(n-2,\,m)&<0,\\
\label[ineq]{pf:dd.n-1<0}
\Delta(n-1,\,m)&<0,
\quad\text{and}\\
\label[ineq]{pf:dd.n<0}
\Delta(n,m)&<0.
\end{align}
By \cref{pf:dd.n-2<0,pf:dd.n-1<0} and by induction hypothesis for $n-1$ and $n-2$, we obtain
\begin{align}
\label[ineq]{pf:n-2:n-3}
\beta \Delta(n-2,\,m)
&<\Delta(n-3,\,m)
<\alpha\Delta(n-2,\,m)
\quad\text{and}\\
\label[ineq]{pf:n-1:n-2}
\beta \Delta(n-1,\,m)
&<\Delta(n-2,\,m)
<\alpha\Delta(n-1,\,m).
\end{align}
By \cref{pf:sign:d3,pf:n-2:n-3}, we deduce that
\begin{equation}\label[ineq]{pf:s2:t2}
s_2(n,m)\Delta(n-2,\,m)
<\sum_{j=2}^3 d_j(n,m)\Delta(n-j,\,m)
<t_2(n,m)\Delta(n-2,\,m),
\end{equation}
where
\begin{align}
\label{def:s2}
s_2(n,m)
&=d_2(n,m)+\beta d_3(n,m)
=-\beta n^2
+(4\beta m+2m+\beta)n
-2m(4m+2\beta+1),
\quad\text{and}\\
\notag
t_2(n,m)
&=d_2(n,m)+\alpha d_3(n,m)
=-\alpha n^2
+(4\alpha m+2m+\alpha)n
-2m(4m+2\alpha+1).
\end{align}

First, we show the lower bound $\alpha$.
We compute that 
\[
mt_2(n,m)
=-n^2
+(2m^2+4m+1)n
-2m(4m^2+m+2).
\]
Since 
\[
n<4m<-\frac{2m^2+4m+1}{2(-1)}
\quad\text{and}\quad
mt_2(4m,m)
=-2m^2<0,
\]
we find
\[
t_2(n,m)<0,\quad\text{for all $n\le 4m$.}
\]
Then by \cref{lem:rec:dd,pf:s2:t2,pf:n-2:n-3},
\begin{align}
\notag
-md_0(n,m)\Delta(n,m)
&=md_1(n,m)\Delta(n-1,\,m)+m\sum_{j=2}^3 d_j(n,m)\Delta(n-j,\,m)\\
\notag
&<md_1(n,m)\Delta(n-1,\,m)+mt_2(n,m)\Delta(n-2,\,m)\\
\label[ineq]{pf:d0:f1:3}
&<t_1(n,m)\Delta(n-1,\,m),
\end{align}
where
\begin{align*}
t_1(n,m)
&=md_1(n,m)+\beta mt_2(n,m)\\
&=(2m-\beta)n^2
+(-10m^2+2\beta m^2+4\beta m-2m+\beta)n\\
&\quad-2m(-4m^2+4\beta m^2-4m+\beta m+2\beta)
\end{align*}
is a quadratic polynomial in $n$ with leading coefficient $2m-\beta>0$.
Since $m\ge 10$ and $\beta>0$, we have
\[
t_1(m+1,\,m)
=-\beta m(6m^2-3m+1)
<0
\quad\text{and}\quad
t_1(4m,\,m)
=-2\beta m^2<0.
\]
Therefore,
\[
t_1(n,m)<0,\quad\text{for all $m+1\le n\le 4m$}.
\]
Together with \cref{pf:dd.n<0}, we can recast \cref{pf:d0:f1:3} as
\[
\frac{\Delta(n-1,\,m)}{\Delta(n,m)}
>\frac{md_0(n,m)}{-t_1(n,m)}.
\]
Hence for the lower bound $\alpha$,
it suffices to show that 
\[
m d_0(n,m)+\alpha t_1(n,m)>0.
\]
Since $\alpha=1/m$, it suffices to show that 
\[
t_0(n,m)>0,\quad\text{for all $m+4\le n\le 4m-2$},
\]
where
\begin{align*}
t_0(n,m)
&=m^2 d_0(n,m)+\alpha m t_1(n,m)\\
&=(-m^2+2m-\beta)n^2
+\brk[s]1{5m^3+(2\beta-9)m^2+(4\beta-2)m+\beta}n\\
&\quad-m\brk[s]1{4m^3+(8\beta-7)m^2+(2\beta-8)m+4\beta}
\end{align*}
is a quadratic polynomial in $n$ with leading coefficient 
\[
-m^2+2m-\beta=-(m-1)^2+1-\beta<0,\quad\text{since $m\ge 10$ and $\beta>0$.}
\]
Again, we compute 
\begin{align*}
t_0(m+4,\,m)
&=6(2-\beta)m^3
+3(3\beta-10)m^2
+(24+5\beta)m
-12\beta,\quad\text{and}\\
t_0(4m,\,m)
&=m^2(3m-2\beta).
\end{align*}
Since $m\ge 10$ and $\beta\in(0,1)$, 
it is elementary to show that these two numbers are positive.
Therefore,
\[
t_0(n,m)>0,
\quad\text{for all $m+4\le n\le 4m$}.
\]
This proves the lower bound $\alpha$.

Second, we show the desired upper bound $\beta$.
By \cref{lem:rec:dd,pf:s2:t2},
\[
-d_0(n,m)\Delta(n,m)
>d_1(n,m)\Delta(n-1,\,m)+s_2(n,m)\Delta(n-2,\,m).
\]
By \cref{pf:n-1:n-2}, we obtain
\begin{equation}\label[ineq]{pf:s1}
-d_0(n,m)\Delta(n,m)
>s_1(n,m)\Delta(n-1,\,m),
\end{equation}
where
\begin{equation}\label{def:s1}
s_1(n,m)=\begin{cases}
d_1(n,m)+\beta s_2(n,m),&\text{if $s_2(n,m)>0$},\\
d_1(n,m)+\alpha s_2(n,m),&\text{if $s_2(n,m)\le 0$}.
\end{cases}
\end{equation}

We claim that 
\[
s_1(n,m)<0,
\quad\text{for all $m+4\le n\le 4m-2$}.
\]
In view of \cref{pf:sign:d1,def:s1}, we obtain the claim 
as if $s_2(n,m)\le 0$.
When $s_2(n,m)>0$, 
\[
s_1(n,m)
=(2-\beta^2)n^2+(-10m+2\beta m+4\beta^2 m-2+\beta^2)n+2m(4m-4\beta m+4-\beta-2\beta^2)
\]
is a quadratic polynomial with leading coefficient $2-\beta^2>0$.
Since $m\ge 10$ and $\beta\in(0,1)$, we find
\[
s_1(m+1,\,m)
=\beta m\brk[s]1{3(\beta-2)m-\beta}
<0
\quad\text{and}\quad
s_1(4m,m)
=-2\beta m<0.
\]
Therefore, we obtain the claimed inequality.

Now, \cref{pf:s1} can be recast as
\begin{equation}\label[ineq]{pf:ratio:s1}
\frac{\Delta(n-1,\,m)}{\Delta(n,m)}
<\frac{d_0(n,m)}{-s_1(n,m)}.
\end{equation}
In order to show the upper bound $\beta$,
it suffices to show that $s_0(n,m)<0$, where
\[
s_0(n,m)=d_0(n,m)+\beta s_1(n,m).
\]
We deal with the two cases separately.

\noindent Case I. $s_2(n,m)\ge 0$.
In this case, 
\begin{align*}
s_0(n,m)
&=d_0(n,m)+\beta \brk1{d_1(n,m)+\beta s_2(n,m)}\\
&=(1-\beta)(\beta^2+\beta-1)n^2
+\brk[s]1{(4\beta^3+2\beta^2-10\beta+5)m+(\beta - 1)(\beta^2 + \beta - 1)}n\\
&\quad -m\brk[s]1{4(2\beta^2-2\beta+1)m+(4\beta^3+2\beta^2-8\beta+1)}.
\end{align*}
Since $s_2(n,m)\ge 0$, we obtain from \cref{def:s2} that
\[
n^2
\le \frac{(4\beta m+2m+\beta)n
-2m(4m+2\beta+1)}{\beta}.
\]
Since $\beta=7/9$, we find $(1-\beta)(\beta^2+\beta-1)>0$. Since $n\le 4m-2$, we can infer that
\begin{align*}
s_0(n,m)
&\le(1-\beta)(\beta^2+\beta-1)\frac{(4\beta m+2m+\beta)n-2m(4m+2\beta+1)}{\beta}\\
&\quad +\brk[s]1{(4\beta^3+2\beta^2-10\beta+5)m+(\beta - 1)(\beta^2 + \beta - 1)}n\\
&\quad -m\brk[s]1{4(2\beta^2-2\beta+1)m+(4\beta^3+2\beta^2-8\beta+1)}\\
&=\frac{(\beta-2)m}{\beta}\cdotp \brk[s]1{(1-2\beta)n+8\beta m-4m-1}\\
&\le\frac{(\beta-2)m}{\beta}\cdotp \brk[s]1{(1-2\beta)(4m-2)+8\beta m-4m-1}\\
&=\frac{(4\beta-3)(\beta-2)m}{\beta}
<0.
\end{align*}

\noindent Case II. $s_2(n,m)\le 0$. 
In this case,
\begin{align*}
ms_0(n,m)
&=md_0(n,m)+\beta m\brk1{d_1(n,m)+\alpha s_2(n,m)}\\
&=(2\beta m-m-\beta^2)n^2
-(10\beta m^2-5m^2-4\beta^2 m-m-\beta^2)n\\
&\quad +m(8\beta m^2-4m^2-m-2\beta-4\beta^2)
\end{align*}
is a quadratic polynomial in $n$ with leading coefficient
\[
2\beta m-m-\beta^2
\ge 10(2\beta-1)-\beta^2
>0,\quad\text{since $m\ge 10$ and $\beta=7/9$.}
\] 
Again, we compute
\begin{align*}
ms_0(m+2,m)
&=(3\beta^2-12\beta+6)m^2+(\beta^2+6\beta-2)m-2\beta^2,
\quad\text{and}\\
ms_0(4m-2,\,m)
&=(9-12\beta)m^2+(8\beta^2 + 6\beta - 6)m-6\beta^2.
\end{align*}
Since $m\ge 10$ and $\beta=7/9$,
it is elementary to show that these two numbers are negative.
Therefore,
\[
s_0(n,m)<0\quad\text{for all $m+2\le n\le 4m-2$}.
\]
This proves the desired upper bound $\beta$, and completes the whole proof.
\end{proof}

We remark that the range $m+4\le n\le 4m-2$ is sharp. In fact, when $n=m+3$,
we can compute by \cref{Delta:m+2,Delta:m+3} that
\[
\frac{\Delta(m+2,\,m)}{\Delta(m+3,m)}=\frac{m}{m-1}>1.
\]
For $n=4m-1$, with the aid of mathematical software,
one may see that 
\[
\frac{\Delta(4m-2,\,m)}{\Delta(4m-1,\,m)}>\frac{7}{9},
\quad\text{for all $1\le m\le 100$}.
\]
We still need an upper bound for the ratio $\Delta(4m-2,\,m)/\Delta(4m-1,\,m)$,
which will be used in the proof of \cref{prop:4m+1:4m+2}.

\begin{lemma}\label{lem:10/9}
Let $m\ge 3$. Then
\[
\frac{\Delta(4m-2,\,m)}{\Delta(4m-1,\,m)}
<\frac{10}{9}.
\]
\end{lemma}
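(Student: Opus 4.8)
The plan is to specialize the recurrence of \cref{lem:rec:dd} at $n=4m-1$ and combine it with the upper bound $7/9$ supplied by \cref{lem:7/9}. First I would dispose of the base cases $m\in\{3,4\}$ by direct computation from \cref{def:A} (equivalently, by reading off \cref{tab:dd}): one finds $\Delta(10,\,3)/\Delta(11,\,3)=9/11$ and $\Delta(14,\,4)/\Delta(15,\,4)=192/215$, both smaller than $10/9$. From now on assume $m\ge 5$.

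Evaluating the coefficients of \cref{lem:rec:dd} at $n=4m-1$, the four values $d_0,d_1,d_2,d_3$ become $2(3m-1)$, $-2(3m-2)$, $-4m$ and $2(2m-1)$ respectively, so the recurrence reads
\[
(3m-1)\Delta(4m-1,\,m)=(3m-2)\Delta(4m-2,\,m)+2m\Delta(4m-3,\,m)-(2m-1)\Delta(4m-4,\,m).
\]
By \cref{prop:n<=4m} each of $\Delta(4m-4,\,m)$, $\Delta(4m-3,\,m)$, $\Delta(4m-2,\,m)$, $\Delta(4m-1,\,m)$ is negative, their indices lying between $m+2$ and $4m$. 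In particular $\Delta(4m-1,\,m)<0$, so the asserted inequality is equivalent to $9\Delta(4m-2,\,m)-10\Delta(4m-1,\,m)>0$. Multiplying this by $3m-1>0$ and substituting the displayed identity for $(3m-1)\Delta(4m-1,\,m)$, the task reduces to showing
\[
N:=(11-3m)\Delta(4m-2,\,m)-20m\Delta(4m-3,\,m)+10(2m-1)\Delta(4m-4,\,m)>0.
\]

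To prove $N>0$ I would argue term by term. Since $m\ge 5$ we have $11-3m<0$ and $\Delta(4m-2,\,m)<0$, hence $(11-3m)\Delta(4m-2,\,m)>0$. For the remaining two terms, put $r=\Delta(4m-4,\,m)/\Delta(4m-3,\,m)$; applying \cref{lem:7/9} with $n=4m-3$, which lies in the admissible window $m+4\le n\le 4m-2$ whenever $m\ge 3$, gives $0<r<7/9<1$. Therefore $-20m+10(2m-1)r<-20m+10(2m-1)<0$, and since $\Delta(4m-3,\,m)<0$,
\[
-20m\Delta(4m-3,\,m)+10(2m-1)\Delta(4m-4,\,m)=\Delta(4m-3,\,m)\bigl(-20m+10(2m-1)r\bigr)>0.
\]
Summing the two positive contributions yields $N>0$, as required.

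The whole argument is essentially sign bookkeeping once the recurrence has been specialized; the only delicate points are that every division by a $\Delta$-value reverses the inequality (all of them being negative) and that $n=4m-3$ must remain inside the range where \cref{lem:7/9} applies, which is precisely why $m=3$ and $m=4$ are handled separately at the outset. I do not expect a genuine obstacle here, only the routine verification of the specialized coefficients and of the two base cases.
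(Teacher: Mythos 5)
Your proof is correct, and it reaches the conclusion by a somewhat different (and arguably cleaner) route than the paper. The paper handles $m=3,4$ directly, exactly as you do, but for $m\ge 5$ it re-enters the machinery built inside the proof of \cref{lem:7/9}: it evaluates $s_2$ and $s_1$ at $n=4m-1$, checks $d_0(4m-1,m)+\tfrac{10}{9}s_1(4m-1,m)<0$, and concludes via the ratio bound $\Delta(n-1,m)/\Delta(n,m)<d_0/(-s_1)$ derived there — an argument that implicitly uses both the lower bound $1/m$ and the upper bound $7/9$ at the two indices $4m-3$ and $4m-2$. You instead specialize \cref{lem:rec:dd} at $n=4m-1$ (your coefficients $2(3m-1)$, $-2(3m-2)$, $-4m$, $2(2m-1)$ are correct), reduce the claim to
\[
(11-3m)\Delta(4m-2,\,m)-20m\Delta(4m-3,\,m)+10(2m-1)\Delta(4m-4,\,m)>0,
\]
and settle it by pure sign bookkeeping, using only the negativity from \cref{prop:n<=4m} and the single bound $\Delta(4m-4,\,m)/\Delta(4m-3,\,m)<7/9$ from \cref{lem:7/9} (in fact your argument only needs this ratio to be less than $1$). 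This buys modularity — your proof depends only on the \emph{statements} of \cref{prop:n<=4m,lem:7/9}, not on inequalities living inside another proof — and uses strictly weaker input; the paper's version, by contrast, comes essentially for free once the $s_1,s_2$ framework of \cref{lem:7/9} is in place. All steps check out, including the equivalence $N>0\iff 9\Delta(4m-2,\,m)>10\Delta(4m-1,\,m)$ (both denominators negative) and the admissibility of $n=4m-3$ in \cref{lem:7/9} for $m\ge 5$.
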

\begin{proof}
It is routine to check that
\[
\frac{\Delta(10,\,3)}{\Delta(11,\,3)}
=\frac{9}{11}<\frac{10}{9}
\quad\text{and}\quad
\frac{\Delta(14,\,4)}{\Delta(15,\,4)}
=\frac{192}{215}<\frac{10}{9}.
\]
Let $m\ge 5$. 
We replay the proof for the upper bound in \cref{lem:7/9}. 
Setting $n=4m-1$ in \cref{def:s2}, we obtain
\[
s_2(4m-1,\,m)
=-\frac{8m+14}{9}
<0.
\]
Then setting $n=4m-1$ in \cref{def:s1}, we obtain
\[
s_1(4m-1,\,m)=
d_1(4m-1,\,m)+\frac{s_2(4m-1,\,m)}{m}
=-\frac{2(27m^2-14m+7)}{9m}
<0.
\]
Since
\[
d_0(4m-1,\,m)+\frac{10}{9} s_1(4m-1,\,m)
=-\frac{2(27m^2 - 59m + 70)}{81m}<0,
\]
we deduce from \cref{pf:ratio:s1} that
\[
\frac{\Delta(4m-2,\,m)}{\Delta(4m-1,\,m)}
<\frac{d_0(4m-1,\,m)}{-s_1(4m-1,\,m)}
<\frac{10}{9}.
\]
This completes the proof.
\end{proof}

In order to show the positivity of $\Delta(4m+1,\,m)$, we consider the function
\begin{equation}\label{def:G}
G(n,m)=\Delta(n,m)+\Delta(n-1,\,m).
\end{equation}
With the aid of Zeilberger's algorithm, we find a recurrence for $G(4m+1,\,m)$.
\begin{lemma}\label{lem:rec:G}
For all integers $m$,
\[
G(4m+1,\,m)
=\frac{-(12m-7)G(4m,m)
+(2m-3)G(4m-1,\,m)
+(8m-2)G(4m-2,\,m)}{3m+1}.
\]
\end{lemma}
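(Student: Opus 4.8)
The plan is to reproduce the pattern of the proof of \cref{lem:rec:dd}: present $G(n,m)$ as a single hypergeometric sum in $k$ and then invoke creative telescoping. Expanding $\Delta(n,m)$ and $\Delta(n-1,m)$ by \cref{def:A} gives
\begin{align*}
G(n,m)&=\sum_k\Bigl[
\binom{k}{m}\Bigl(\binom{n-1-k}{k-m}+\binom{n-2-k}{k-m}\Bigr)\\
&\qquad\qquad-\binom{k}{m-1}\Bigl(\binom{n-1-k}{k-m+1}+\binom{n-2-k}{k-m+1}\Bigr)\Bigr],
\end{align*}
a proper hypergeometric summand in $k$ with parameters $n$ and $m$. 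Running Zeilberger's algorithm on this sum with respect to $n$ returns a linear recurrence
\[
\sum_{j=0}^{3}c_j(n,m)\,G(n-j,m)=0
\]
with polynomial coefficients $c_j(n,m)$ and an explicit rational certificate $R(n,m,k)$; once $R$ is in hand, the recurrence is verified by the routine polynomial identity stating that the summand telescopes. One expects order exactly $3$ here, since $G(n,m)=\Delta(n,m)+\Delta(n-1,m)$ is built from two consecutive shifts of $\Delta$, which satisfies the order-$3$ recurrence of \cref{lem:rec:dd}, whereas $G$ satisfies no $3$-term recurrence.

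Next I would set $n=4m+1$. Then the shifted arguments $G(n-j,m)$ for $j=0,1,2,3$ are exactly $G(4m+1,m)$, $G(4m,m)$, $G(4m-1,m)$, and $G(4m-2,m)$, so the specialized recurrence already has the form asserted in the lemma. It remains to read off the coefficients: one checks that $c_0(4m+1,m)$ is a nonzero polynomial multiple of $3m+1$ — so the recurrence may be solved for $G(4m+1,m)$ — and that, after cancelling the common content, the coefficients of the four terms are $3m+1$, $12m-7$, $-(2m-3)$, and $-(8m-2)$, which is precisely the stated identity. This last step is a finite computation; the whole identity can also be cross-checked against \cref{tab:dd} — for instance, at $m=3$ it reads $10\,G(13,3)=29\cdot 34-3\cdot 40-22\cdot 33$, with both sides equal to $140$.

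The delicate point — and the reason for introducing $G$ at all — is \emph{non-degeneracy of the specialization}. At $n=4m+1$ the recurrence of \cref{lem:rec:dd} collapses to the content-free identity \cref{dd:4m}, which determines nothing about $\Delta(4m+1,m)$, because two of its coefficients, $d_0$ and $d_2$, share the factor $n-4m-1$. One must therefore confirm that nothing similar happens for $G$, i.e.\ that $c_0(4m+1,m)\not\equiv 0$; as indicated above this amounts to the observation that $c_0(4m+1,m)$ is a nonzero multiple of $3m+1$. A secondary bookkeeping matter is to make sure Zeilberger returns order exactly $3$, so that the specialization is genuinely a $4$-term recurrence rather than a longer or shorter one — this is the point I would check most carefully when running the algorithm.
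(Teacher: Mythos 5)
Your proposal is correct and follows essentially the same route as the paper: write $G(n,m)$ as a hypergeometric sum via \cref{def:A}, run Zeilberger's algorithm to get an order-three recurrence in $n$, specialize at the relevant $n$ (the paper sets $n=4m-2$ in its forward-shift form, equivalent to your $n=4m+1$), and check the coefficient of $G(4m+1,m)$ is a nonzero multiple of $3m+1$ (it is $-m(3m+1)$, with the common factor $m$ cancelling) before rearranging. The only difference is that the paper records the explicit certificate coefficients $l_j(n,m)$, which your finite computation would reproduce.
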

\begin{proof}
From the definition
\begin{multline*}
G(n,m)
=\sum_k\left[
\binom{k}{m}\binom{n-1-k}{k-m}
-\binom{k}{m-1}\binom{n-1-k}{k-m+1}\right.\\
\quad+\left.
\binom{k}{m}\binom{n-2-k}{k-m}
-\binom{k}{m-1}\binom{n-2-k}{k-m+1}\right],
\end{multline*}
we see that the summand in $G(n,m)$ is hypergeometric.
Invoking Zeilberger's algorithm, we obtain
\[
\sum_{j=0}^3
l_j(n,m)G(n+j,\,m)=0,
\]
for all integers $n$ and $m$,
where
\begin{align*}
l_0(n,m)
&=(n+1)(2n^2-13mn+10n+20m^2-32m+12),\\
l_1(n,m)
&=-m(4n^2-26mn+21n+40m^2-74m+29),\\
l_2(n,m)
&=-4n^3
+30mn^2-24n^2
-66m^2n+119mn-44n
+40m^3-140m^2+109m-24,\quad\text{and}\\
l_3(n,m)
&=(n-m+3)(2n^2-13mn+6n+20m^2-19m+4).
\end{align*}
Setting $n=4m-2$ and rearranging the terms give the desired recurrence.
\end{proof}

Now we are able to
confirm the positivity of $\Delta(4m+1,\,m)$ and $\Delta(4m+2,\,m)$.

\begin{proposition}\label{prop:4m+1:4m+2}
Let $n\ge 6$. Then 
\[
\Delta(n,m)>0
\quad\text{for $n\in\{4m+1,\,4m+2\}$}.
\]
\end{proposition}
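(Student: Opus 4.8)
The plan is to treat the cases $n=4m+1$ and $n=4m+2$ separately, after disposing of the finitely many instances with $m\le 4$: the values $\Delta(6,1)$, $\Delta(9,2)$, $\Delta(10,2)$, $\Delta(13,3)$, $\Delta(14,3)$ are read off from \cref{tab:dd}, and $\Delta(17,4)$, $\Delta(18,4)$ from \cref{def:A}, and all of them are positive (the entry $\Delta(5,1)=0$ is immaterial because $5<6$). So from now on assume $m\ge 5$, which is exactly the range in which \cref{lem:7/9} is available.

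For $n=4m+1$ the idea is to pass through the auxiliary function $G$ of \cref{def:G}. Since $G(4m+1,m)=\Delta(4m+1,m)+\Delta(4m,m)$ and $\Delta(4m,m)<0$ by \cref{prop:n<=4m}, it suffices to show $G(4m+1,m)\ge 0$. Substituting $G(k,m)=\Delta(k,m)+\Delta(k-1,m)$ into \cref{lem:rec:G} and using the degeneracy \cref{dd:4m}, that is $\Delta(4m,m)=\tfrac23\Delta(4m-2,m)$, to eliminate $\Delta(4m,m)$, one rewrites \cref{lem:rec:G} as the identity
\begin{align*}
(3m+1)\,G(4m+1,m)
&=(4-10m)\Delta(4m-1,m)+\tfrac{6m-1}{3}\Delta(4m-2,m)\\
&\quad+(8m-2)\Delta(4m-3,m).
\end{align*}
By \cref{prop:n<=4m} the three quantities $\Delta(4m-1,m)$, $\Delta(4m-2,m)$, $\Delta(4m-3,m)$ are all negative, so writing $a=-\Delta(4m-1,m)$, $b=-\Delta(4m-2,m)$, $c=-\Delta(4m-3,m)$ the right-hand side equals $(10m-4)a-\tfrac{6m-1}{3}b-(8m-2)c$ with $a,b,c>0$. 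Now \cref{lem:10/9} gives $b<\tfrac{10}{9}a$ and \cref{lem:7/9} applied at $n=4m-2$ gives $c<\tfrac79 b<\tfrac{70}{81}a$; inserting these into the two negative terms shows that the expression exceeds $\bigl[(10m-4)-\tfrac{740m-170}{81}\bigr]a=\tfrac{70m-154}{81}\,a$, which is positive for $m\ge 5$. Hence $G(4m+1,m)>0$ and therefore $\Delta(4m+1,m)=G(4m+1,m)-\Delta(4m,m)>0$.

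For $n=4m+2$ the idea is simply to read off \cref{lem:rec:dd} at that value of $n$, where the coefficients specialise to $d_0=-(3m+2)$, $d_1=4(3m+1)$, $d_2=2m$ and $d_3=-2(4m+1)$, which yields
\begin{align*}
(3m+2)\,\Delta(4m+2,m)
&=4(3m+1)\Delta(4m+1,m)+2m\Delta(4m,m)\\
&\quad-2(4m+1)\Delta(4m-1,m).
\end{align*}
On the right, $\Delta(4m+1,m)>0$ by the case just proved, while $\Delta(4m,m)$ and $\Delta(4m-1,m)$ are negative by \cref{prop:n<=4m}; using $\Delta(4m,m)=\tfrac23\Delta(4m-2,m)$ together with \cref{lem:10/9} once more, the term $2m\Delta(4m,m)$ has absolute value below $\tfrac{40m}{27}\lvert\Delta(4m-1,m)\rvert$, which in turn is below $2(4m+1)\lvert\Delta(4m-1,m)\rvert$. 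Hence the contribution of $-2(4m+1)\Delta(4m-1,m)$ alone outweighs that of $2m\Delta(4m,m)$, so the right-hand side is positive and $\Delta(4m+2,m)>0$.

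I expect the point demanding the most care to be the bookkeeping in the collapsed $G$-identity --- obtaining the coefficients $4-10m$, $\tfrac{6m-1}{3}$, $8m-2$ correctly after combining \cref{lem:rec:G} with \cref{dd:4m} --- together with the fact that \cref{lem:7/9} supplies the bound $\Delta(4m-3,m)/\Delta(4m-2,m)<\tfrac79$ only for $m\ge 5$; this is precisely why $m\le 4$ must be checked by hand and why \cref{lem:10/9}, valid already from $m\ge 3$, is needed as a separate ingredient. The remaining steps are elementary polynomial arithmetic and sign chasing.
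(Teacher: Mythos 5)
Your proposal is correct and follows essentially the same route as the paper: check $m\le 4$ by hand, combine \cref{lem:rec:G} with \cref{dd:4m} to express $(3m+1)G(4m+1,m)$ in terms of $\Delta(4m-1,m)$, $\Delta(4m-2,m)$, $\Delta(4m-3,m)$, apply \cref{lem:7/9,lem:10/9} to get positivity of $G(4m+1,m)$, and then specialize \cref{lem:rec:dd} at $n=4m+2$. The only (harmless) difference is in the last step, where the paper notes the left-hand side $4(3m+1)\Delta(4m+1,m)+2m\Delta(4m,m)$ is positive directly from $\Delta(4m+1,m)>-\Delta(4m,m)$, while you instead bound $2m\abs{\Delta(4m,m)}$ via \cref{dd:4m} and \cref{lem:10/9}; both are valid.
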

\begin{proof}
The truth for $1\le m\le 4$ can be checked directly:
\begin{align*}
&\Delta(6,1)=2,\qquad
\Delta(9,2)=3,\qquad
\Delta(13,3)=26,\qquad
\Delta(17,4)=294,\\
&
\Delta(10,2)=14,\qquad
\Delta(14,3)=140,\qquad
\Delta(18,4)=1541.
\end{align*}
Let $m\ge 5$. For convenience, we write
\[
\Delta(n,m)=\Delta(n)
\quad\text{and}\quad
G(n,m)=G(n).
\]
In view of \cref{lem:rec:G}, we consider the function
\[
(3m+1)G(4m+1)
=-(12m-7)G(4m)
+(2m-3)G(4m-1)
+(8m-2)G(4m-2).
\]
By \cref{dd:4m}, we can express it as 
\begin{align*}
(3m+1)G(4m+1)
&=-(12m-7)\brk1{\Delta(4m)+\Delta(4m-1)}
+(2m-3)\brk1{\Delta(4m-1)+\Delta(4m-2)}\\
&\quad+(8m-2)\brk1{\Delta(4m-2)+\Delta(4m-3)}\\
&=2(4m-1)\Delta(4m-3)
+\brk2{2m-\frac{1}{3}}\Delta(4m-2)
-2(5m-2)\Delta(4m-1).
\end{align*}
By \cref{lem:7/9,prop:n<=4m,lem:10/9},
\[
\Delta(4m-3)>\frac{7}{9}\Delta(4m-2)
\quad\text{and}\quad
\Delta(4m-2)>\frac{10}{9}\Delta(4m-1).
\]
Therefore,
\begin{align*}
(3m+1)G(4m+1)
&>\brk3{\frac{7}{9}\cdotp 2(4m-1)
+\brk2{2m-\frac{1}{3}}}\Delta(4m-2)
-2(5m-2)\Delta(4m-1)\\
&=\frac{74m-17}{9}\cdotp \Delta(4m-2)
-2(5m-2)\Delta(4m-1)\\
&>\brk3{\frac{10}{9}\cdotp\frac{74m-17}{9}-2(5m-2)}\Delta(4m-1)\\
&=\frac{2(77-35m)}{81}\cdotp \Delta(4m-1)
>0.
\end{align*}
By \cref{def:G,prop:n<=4m}, we derive that
\begin{equation}\label[ineq]{ineq:4m+1}
\Delta(4m+1)>-\Delta(4m)>0.
\end{equation}

When $n=4m+2$, \cref{lem:rec:dd} reduces to
\begin{equation}\label{rec:dd.4m+2}
4(3m+1)\Delta(4m+1)
+2m\Delta(4m)
=2(4m+1)\Delta(4m-1)
+(3m+2)\Delta(4m+2).
\end{equation}
By \cref{ineq:4m+1}, the left side of \cref{rec:dd.4m+2} is positive, 
and so is the right side:
\[
2(4m+1)\Delta(4m-1)
+(3m+2)\Delta(4m+2)
>0.
\]
By \cref{prop:n<=4m}, we know that $\Delta(4m-1)<0$.
Thus $\Delta(4m+2)>0$.
\end{proof}

\begin{proposition}\label{prop:4m+3}
Let $m\ge 0$. The sequence $\{\Delta(n,m)\}_{n\ge \max(6,\,4m+1)}$ is increasing.
As a consequence, 
\[
\Delta(n,m)>0\quad\text{if $n\ge 4m+3$}.
\]
\end{proposition}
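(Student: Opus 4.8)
The plan is to run an upward induction on $n$ driven by \cref{lem:rec:dd}, once the signs of its coefficients are pinned down. First I would record that for $n\ge 4m+2$ one has $d_0(n,m),d_3(n,m)<0<d_1(n,m)$ and $d_2(n,m)\ge 0$, that $\abs{d_0(n,m)}=(n-m)(n-4m-1)>0$, that $d_2(n,m)-\abs{d_3(n,m)}<0$ (immediate on substituting $n=4m+s$ with $s\ge 2$), and, crucially, the identity
\[
d_0(n,m)+d_1(n,m)+d_2(n,m)+d_3(n,m)=m(n-4m+1),
\]
equivalently $d_1+d_2-\abs{d_3}=\abs{d_0}+m(n-4m+1)$. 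Rewriting \cref{lem:rec:dd} as $\abs{d_0}\,\Delta(n,m)=d_1\Delta(n-1,m)+d_2\Delta(n-2,m)-\abs{d_3}\,\Delta(n-3,m)$, the engine of the induction will be the implication, valid for $n\ge 4m+2$: if $\Delta(n-1,m)>0$, $\Delta(n-3,m)\le\Delta(n-2,m)$, and $\Delta(n-3,m)<\Delta(n-1,m)$, then $\Delta(n,m)>\Delta(n-1,m)$. To prove it I would bound $d_2\Delta(n-2,m)\ge d_2\Delta(n-3,m)$, and then, since $d_2-\abs{d_3}<0$, replace $\Delta(n-3,m)$ by the larger quantity $\Delta(n-1,m)$; the right-hand side becomes $\bigl(\abs{d_0}+m(n-4m+1)\bigr)\Delta(n-1,m)\ge\abs{d_0}\,\Delta(n-1,m)$, and dividing through by $\abs{d_0}>0$ concludes.

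With the engine in hand I would put $N=\max(6,4m+1)$ and assemble the starting data. The case $m=0$ is immediate, as $\Delta(n,0)=A(n,0)$ is a Fibonacci number. For $m\ge 2$, \cref{prop:4m+1:4m+2} provides $\Delta(4m+1,m)>0$ and $\Delta(4m+1,m)>-\Delta(4m,m)$, while \cref{prop:n<=4m} provides $\Delta(4m-1,m),\Delta(4m,m)<0$; feeding these into the $n=4m+2$ instance of \cref{lem:rec:dd}, namely \cref{rec:dd.4m+2}, and rearranging yields $\Delta(4m+2,m)>\Delta(4m+1,m)$, which settles the step $n=N+1$. The step $n=N+2=4m+3$ then follows from the engine, whose hypotheses hold since $\Delta(4m+1,m)>0>\Delta(4m,m)$ and $\Delta(4m+2,m)>0$; and for $n\ge N+3$ all three of $\Delta(n-1,m),\Delta(n-2,m),\Delta(n-3,m)$ carry indices $\ge N$, so the induction hypothesis supplies $\Delta(n-3,m)<\Delta(n-2,m)<\Delta(n-1,m)$ and $\Delta(n-1,m)>\Delta(N,m)>0$, and the engine applies again. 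The case $m=1$ runs identically with $N=6$, starting from $\Delta(4,1)=\Delta(5,1)=0$ (by \cref{Delta:m+3,Delta:m+4}) and $\Delta(6,1)>0$. Finally, since $\{\Delta(n,m)\}_{n\ge N}$ is increasing and $\Delta(N,m)>0$, it is positive throughout $\{n\ge N\}$; as $4m+3\ge N$ when $m\ge 1$ (and $\Delta(n,0)=A(n,0)>0$ for $n\ge 1$), the consequence $\Delta(n,m)>0$ for $n\ge 4m+3$ follows at once.

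I expect the main obstacle to be not the induction itself but coaxing the coefficient arithmetic into cooperating: one must verify the full sign pattern $d_0,d_3<0<d_1$, $d_2\ge 0$ and the strict bound $d_2-\abs{d_3}<0$ on the nose for every $n\ge 4m+2$, and above all the identity $\sum_{j}d_j(n,m)=m(n-4m+1)$, which is precisely what makes the closing estimate $\bigl(\abs{d_0}+m(n-4m+1)\bigr)\Delta(n-1,m)\ge\abs{d_0}\,\Delta(n-1,m)$ succeed rather than fail. A secondary nuisance is that \cref{lem:rec:dd} at $n=N+1$ — and at $n=N+2$ when $m\ge 2$ — reaches back to $\Delta(4m,m)$ and $\Delta(4m-1,m)$, which lie just below the claimed range and so have to be controlled separately, via \cref{prop:n<=4m,prop:4m+1:4m+2} and, in the smallest cases $m\le 2$, via the closed forms \cref{Delta:m+2,Delta:m+3,Delta:m+4,Delta:m+5} and the values tabulated in \cref{tab:dd}.
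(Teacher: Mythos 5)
Your proof is correct, but it takes a genuinely different route from the paper. The paper never touches \cref{lem:rec:dd} here: it sets $g(n,m)=\Delta(n,m)-\Delta(n-1,\,m)$, takes the first difference of the original five-term recurrence \cref{rec:A.5} in $m$ to get the identity $g(n,m)=g(n-1,\,m-1)+\Delta(n-2,\,m)$, and then runs a double induction (outer on $m$, inner on $n$, with \cref{prop:4m+1:4m+2} supplying the bases $n\in\{4m+1,4m+2\}$), proving increase and positivity simultaneously with essentially no coefficient analysis. You instead stay inside the fixed-$m$ four-term recurrence, and your ``engine'' rests on the sign pattern $d_0,d_3<0<d_1$, $d_2\ge 0$, the bound $d_2+d_3<0$, and the identity $\sum_j d_j(n,m)=m(n-4m+1)$ for $n\ge 4m+2$ --- all of which check out (I verified the identity and the sign claims), and the induction with base steps at $n=4m+2$ and $n=4m+3$ goes through, including the $m=1$ case via $\Delta(4,1)=\Delta(5,1)=0$. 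What the paper's route buys is brevity and robustness (the two-variable identity does all the work); what yours buys is that it needs only the single recurrence already produced by Zeilberger's algorithm and makes the monotonicity mechanism explicit. One small repair: the inequality $\Delta(4m+1,\,m)>-\Delta(4m,\,m)$ that you feed into \cref{rec:dd.4m+2} is not part of the statement of \cref{prop:4m+1:4m+2}; it is \cref{ineq:4m+1} inside its proof, so cite that (or rederive it, e.g.\ via \cref{dd:4m} and \cref{lem:10/9}, which bound $|\Delta(4m,\,m)|$ by $\tfrac{20}{27}|\Delta(4m-1,\,m)|$ and make the $n=4m+2$ step work without it).
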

\begin{proof}
Consider the function 
\[
g(n,m)=\Delta(n,m)-\Delta(n-1,\,m).
\]
We need to show that 
\begin{equation}\label[ineq]{dsr:g>0}
g(n,m)>0,\quad\text{for all $n\ge \max(6,\,4m+1)$.}
\end{equation}
When $m=0$, the number 
\[
g(n,0)
=\Delta(n,0)-\Delta(n-1,\,0)
=A(n,0)-A(n-1,\,0)
=A(n-2,\,0)
\]
is the $(n-2)$th Fibonacci number. 
Since $n\ge 6$, we find $g(n,0)>0$.
We proceed by induction on $m$ and let $m\ge 1$.
Now, we will show \cref{dsr:g>0}, and simultaneously
\begin{equation}\label[ineq]{dsr:dd>0}
\Delta(n,m)>0,\quad\text{for all $n\ge \max(6,\,4m+1)$}.
\end{equation}
Let $n\ge 6$. 
By \cref{prop:4m+1:4m+2}, 
\cref{dsr:dd>0} is true if $n\in\{4m+1,\,4m+2\}$.
Showing \cref{dsr:dd>0} by induction on $n$, we can suppose that
\begin{align}
\label[ineq]{pf:dd.n-2}
\Delta(n-2,\,m)&>0\quad\text{and}\\
\label[ineq]{pf:dd.n-1}
\Delta(n-1,\,m)&>0.
\end{align}
Taking the order-one difference of \cref{rec:A.5} with respect to $m$, 
we obtain
\[
g(n,m)=g(n-1,\,m-1)+\Delta(n-2,\,m)>0
\]
by the induction hypothesis for $m-1$ and \cref{pf:dd.n-2}.
It follows that 
\[
\Delta(n,m)
=g(n,m)+\Delta(n-1,\,m)>0
\]
by \cref{pf:dd.n-1}.
This completes the proof of \cref{dsr:g>0,dsr:dd>0}.
\end{proof}

Combining \cref{prop:n<=4m,prop:4m+1:4m+2,prop:4m+3} together,
we obtain a proof for \cref{thm:unimodal}.

We think the proof is a bit skillful at choosing the bounds $7/9$ and $10/9$
and show. A simple injective proof for \cref{thm:unimodal} will be interesting.

\section{Root geometry of the generating functions --- A proof for \cref{thm:rg}}
\label{sec:zeros}

Let $f$ and $g$ be real-rooted polynomials
with zero sets $\{s_1,\dots,s_n\}_<$ and $\{t_1,\dots,t_{n-1}\}_<$, respectively.
We say that $g$ \emph{strictly interlaces} $f$ if 
\[
s_1<t_1<s_2<t_2<\dots<s_{n-1}<t_{n-1}<s_n.
\]
It is quite common to show real-rootedness of 
polynomials that satisfy certain recurrence by using 
the method of interlacing zeros, see Liu and Wang~\cite{LW07}.

\begin{proposition}\label{prop:zeros}
Let $n\ge 2$.
Then every zero of the generating function $F_n(z)$ lies on the left half part of the circle 
\[
\abs{z-1}=2.
\] 
\end{proposition}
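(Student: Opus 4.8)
The plan is to push the circle $\abs{z-1}=2$ onto the real line by a Möbius substitution, so that a zero on the circle becomes a real zero and the left half of the circle becomes a bounded interval, and then to run the method of interlacing zeros on the resulting polynomial family.

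\textbf{Step 1: a Möbius substitution.} Consider $z=\dfrac{3w-i}{w+i}$, with inverse $w=\dfrac{-i(z+1)}{z-3}$. For real $w$ one computes $z-1=\dfrac{2(w^2-1)-4iw}{w^2+1}$, so $\abs{z-1}=2$, and $\operatorname{Re}z=3-\dfrac{4}{w^2+1}$, so $\operatorname{Re}z<1$ precisely when $\abs w<1$; hence $w\mapsto z$ maps $\mathbb R$ bijectively onto the circle, carrying $(-1,1)$ onto its open left half. Put $P_n(w)=(w+i)^{n-1}F_n\!\left(\dfrac{3w-i}{w+i}\right)$. Since $1+z=\dfrac{4w}{w+i}$ and $1-z=\dfrac{2(i-w)}{w+i}$, substituting in \cref{gf:A} makes the powers of $(w+i)$ cancel, leaving the genuinely real three-term recurrence
\[
P_n(w)=4w\,P_{n-1}(w)-2(w^2+1)\,P_{n-2}(w)\qquad(n\ge3),
\]
with $P_1=1$ and $P_2=4w$. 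The leading coefficient $\ell_n$ of $P_n$ satisfies $\ell_n=4\ell_{n-1}-2\ell_{n-2}$, $\ell_1=1$, $\ell_2=4$, whence $\ell_n>0$; since $\deg F_n=n-1$, the factor $(w+i)^{n-1}$ exactly absorbs the pole at $w=-i$, so $\deg P_n=n-1$. A zero $z_0$ of $F_n$ corresponds via $z_0=\frac{3w_0-i}{w_0+i}$ to a zero $w_0$ of $P_n$, and conversely, with distinct zeros matched to distinct zeros; since both polynomials have exactly $n-1$ zeros this is a bijection of zero sets. It therefore suffices to prove that all $n-1$ zeros of $P_n$ are real and lie in $(-1,1)$.

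\textbf{Step 2: interlacing together with localization.} I would prove by induction on $n$ the conjunction: $P_n$ has $n-1$ simple real zeros, all in $(-1,1)$, and the zeros of $P_{n-1}$ strictly interlace those of $P_n$; the cases $n=2,3$ are immediate from $P_2=4w$, $P_3=14w^2-2$. For the inductive step, two endpoint facts are needed. From $P_n(1)=4P_{n-1}(1)-4P_{n-2}(1)$ one gets $P_n(1)=n\,2^{n-1}$, and from $P_n(-w)=(-1)^{n-1}P_n(w)$ one gets $P_n(-1)=(-1)^{n-1}n\,2^{n-1}$; in particular $P_n(\pm1)\neq0$. Now let $t_1<\dots<t_{n-2}$ be the zeros of $P_{n-1}$ (all in $(-1,1)$ by hypothesis), separated by the zeros $s_1<\dots<s_{n-3}$ of $P_{n-2}$. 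The recurrence gives $P_n(t_j)=-2(t_j^2+1)P_{n-2}(t_j)$, so $P_n$ alternates in sign along the $t_j$ (because $P_{n-2}$ does), producing one zero of $P_n$ in each $(t_j,t_{j+1})$. Comparing the sign of $P_n$ at $t_{n-2}$ (negative, since $P_{n-2}(t_{n-2})>0$) with $P_n(1)>0$ yields one further zero in $(t_{n-2},1)$, and a similar sign comparison at $t_1$ against $P_n(-1)$ yields one in $(-1,t_1)$. That is $n-1$ zeros in all; they lie in $(-1,1)$ and strictly interlace the $t_j$, so the induction closes. By Step 1 this proves the proposition.

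\textbf{The main obstacle.} Real-rootedness of $P_n$ on its own only places the zeros of $F_n$ somewhere on the whole circle $\abs{z-1}=2$; confining them to the \emph{left} half is equivalent to the sharper fact that every zero of $P_n$ lies in $(-1,1)$, which is not a formal consequence of interlacing. The device that makes this tractable is the exact evaluation $P_n(\pm1)=\pm n\,2^{n-1}$, which lets the localization be folded into the same interlacing induction instead of requiring a separate root bound. Arranging the Möbius map so that the induced recurrence has real coefficients---and so that the left half of the circle becomes the symmetric interval $(-1,1)$---is the other point that needs care; once these choices are in place, verifying the recurrence for $P_n$, the sign chase, and the endpoint values are all routine.
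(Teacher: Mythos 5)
Your proof is correct and follows essentially the same route as the paper: a Möbius change of variable carrying the circle $\abs{z-1}=2$ to the real line (so that \cref{gf:A} becomes the real three-term recurrence $P_n=4wP_{n-1}-2(w^2+1)P_{n-2}$, the analogue of the paper's $W_n=2(z+1)W_{n-1}-(z^2+1)W_{n-2}$), followed by an interlacing induction. The only differences are in normalization and detail: you arrange the map so the left half-circle becomes $(-1,1)$ and pin the zeros there via the endpoint values $P_n(\pm1)=\pm n\,2^{n-1}$, whereas the paper maps to the negative axis, uses palindromicity for the endpoint control, and leaves the interlacing argument (which you carry out in full) to the reader.
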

\begin{proof}
We transform the problem of ``circular zeros'' 
to a problem of real-rootedness by considering the function
\[
W_n(z)
=\frac{e^{in\pi/4}(z-i)^{n}}{2^{n/2}}F_{n+1}\brk3{\frac{2(z+i)}{iz+1}+1}.
\]
Here is how this function was born.
\begin{enumerate}
\item
By mapping $z\mapsto 2z+1$, the desired result 
can be restated as that every zero of the polynomial
\[
G_n(z)=F_n(2z+1)
\]
lies on the left half part of the unit circle.
\item
By the M\"obius transformation $z\mapsto(z+i)/(iz+1)$,
the previous desired zero distribution becomes that
the function 
\[
H_n(z)=G_n\brk3{\frac{z+i}{iz+1}}
\]
has only non-positive zeros.
\item
Multiplying $H_n(z)$ by $(z-i)^{n-1}$, 
we can lift the rational functions $H_n(z)$ up to polynomials.
\item
By the translation $n\mapsto n+1$, we can make the entire polynomial sequence 
a bit more standard in the sense that the $n$th polynomial has degree $n$.
\item
Multiplying the polynomials $(z-i)^n H_{n+1}(z)$ by $e^{in\pi/4}$,
we can rotate their complex coefficients to real ones.
\item
Dividing the resulting real polynomials by $2^{n/2}$, 
we can compress their irrational coefficients to rational ones.
\end{enumerate}
It suffices to show that $W_n(z)$ has only negative zeros.
In terms of $W_n(z)$, \cref{gf:A} becomes
\begin{equation}\label{pf:rec:W}
W_n(z)
=2(z+1)W_{n-1}(z)
-(z^2+1)W_{n-2}(z),\quad\text{for $n\ge 2$},
\end{equation}
with initial values
\[
W_0(z)=1
\quad\text{and}\quad
W_1(z)=2(z+1).
\]
By \cref{pf:rec:W}, it is easy to show by induction that 
$W_n(z)$ is a polynomial of degree $n$ with leading coefficient $n+1$.
It is observable that the polynomials
\[
T_n(z)=z^n W_n(1/z)
\]
satisfy the same recurrence \cref{pf:rec:W} and have the same initial values $T_0=W_0$
and $T_1=W_1$. Thus $T_n=W_n$ for all $n$, which implies that $W_n$ is palindromic.
In particular, the constant term $W_n(0)$ equals the leading coefficient, which is positive.
Using this, one may show by induction that $W_n(z)$ has $n$ negative simple zeros and that $W_{n-1}$ strictly interlaces $W_n$. 
The proof using the method of interlacing zeros is quite classical 
and we left it for the explicit polynomial sequence $\{W_n\}$ to interested readers.
Hence every zero of $H_n$ is negative.
This completes the proof.
\end{proof}

Furthermore, we are able to show that the zeros of polynomials $F_n(z)$
are densely distributed on the half circle, see~\cref{prop:limit}.
Consider the set of zeros of all polynomials in the sequence $\mathcal W=\{W_n(z)\}_{n\ge 0}$ 
as a subset of $\mathbb R^2$ with the Euclidean topology.
A complex number $z^*$ is a \emph{limit of zeros} of the polynomials in~$\mathcal W$ 
if there is a zero $z_n$ of the polynomial $W_n(z)$ for each $n$ such that 
\[
\lim_{n\to\infty}z_n=z^*.
\]
A limit of zero is either isolated or non-isolated.
It is clear that both the set of isolated limits of zeros and
the set of non-isolated limits of zeros are closed.

In order to characterize the zero distribution of the polynomials $F_n(z)$,
we need a particular case of Wang and Zhang's result \cite[Theorem 2.3]{WZ21}.

\begin{theorem}[Wang and Zhang]\label{thm:lz}
Let $a,b,c,d\in\mathbb R$ and $ac\ne0$.
Let $W_n(z)$ be a polynomial defined by
$W_0(z)=1$, $W_1(z)=z$, and
\[
W_n(z) = (az+b)W_{n-1}(z)+(cz+d)W_{n-2}(z),
\quad\text{for $n\ge 2$.}
\]
Suppose that $\mathcal W=\{W_n(z)\}_{n\ge 0}$ is not
the sequence $\{z^n\}_{n=0}^{\infty}$ of geometric series.
Let
\[
\Delta_\Delta=c^2+a(bc-ad).
\]
If $\Delta_\Delta<0$,
then the set of isolated limits of zeros of $\mathcal W$ is
\begin{align*}
\brk[c]2{z\colon (1-a)z^2-(b+c)z-d=0,\,
\Re\brk2{(az+b)\overline{(2-a)z-b}}<0},
\end{align*}
and the set of non-isolated limits of zeros of $\mathcal W$
is the circular arc connecting the points $x_\Delta^-$ and~$x_\Delta^+$, 
through the point $-b/a$, where
\[
x_\Delta^\pm
=\frac{-ab-2c\pm2\sqrt{\Delta_\Delta}}{a^2}.
\]
\end{theorem}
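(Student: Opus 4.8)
The plan is to solve the recurrence in closed form and then appeal to the Beraha--Kahane--Weiss theory of limits of zeros. Write $P(z)=az+b$ and $Q(z)=cz+d$, so the characteristic equation of the recurrence is $\lambda^2-P\lambda-Q=0$, with roots $\lambda_\pm=\tfrac12(P\pm\sqrt{P^2+4Q})$ obeying $\lambda_++\lambda_-=P$ and $\lambda_+\lambda_-=-Q$. Off the branch locus $P^2+4Q=0$ the roots are distinct and $W_n=\alpha\lambda_+^n+\beta\lambda_-^n$, where the initial data $W_0=1$, $W_1=z$ force $\alpha=(z-\lambda_-)/(\lambda_+-\lambda_-)$ and $\beta=(\lambda_+-z)/(\lambda_+-\lambda_-)$. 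The Beraha--Kahane--Weiss theorem then says that $z^*$ is a limit of zeros exactly when either the two roots are equimodular and dominant at $z^*$ (the \emph{non-isolated} limits, which trace out curves), or one root strictly dominates while its coefficient vanishes there (the \emph{isolated} limits). The whole proof is the explicit computation of these two loci.

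For the non-isolated part I would compute the equimodular locus $\abs{\lambda_+}=\abs{\lambda_-}$. Since $\lambda_+/\lambda_-+\lambda_-/\lambda_+=(P^2+2Q)/(-Q)=-P^2/Q-2$, and since a nonzero complex number $w$ is unimodular iff $w+w^{-1}$ is real and lies in $[-2,2]$, equimodularity is equivalent to $P^2/Q\in[-4,0]$. Writing $z=x+iy$ with $a,b,c,d$ real, the reality condition $\Im(P^2\overline Q)=0$ factors as $y\cdot\{\,-c[(ax+b)^2-a^2y^2]+2a(ax+b)(cx+d)\,\}=0$; expanding the bracket, the $x^2$ and $y^2$ terms share the common coefficient $a^2c\neq0$, so the bracket is a circle centered at $(-d/c,0)$. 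Imposing further $\Re(P^2/Q)\in[-4,0]$ cuts out a sub-arc: its endpoints are the solutions of $P^2/Q=-4$, that is $P^2+4Q=0$, which are precisely $x_\Delta^\pm=\frac{-ab-2c\pm2\sqrt{\Delta_\Delta}}{a^2}$, while $P^2/Q=0$ is attained at $z=-b/a$. The hypothesis $\Delta_\Delta<0$ makes $x_\Delta^\pm$ a conjugate pair off the real axis, so the locus is a genuine circular arc joining them through $-b/a$, as stated.

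For the isolated part the dominant coefficient must vanish. As $\alpha=0\iff z=\lambda_-$ and $\beta=0\iff z=\lambda_+$, every isolated limit is a fixed point $z^*=\lambda_\pm(z^*)$; substituting $\lambda=z$ into the characteristic equation gives $(1-a)z^2-(b+c)z-d=0$, the quadratic in the statement. At such a point the non-fixed root is $\mu=P-z=(a-1)z+b$, and Beraha--Kahane--Weiss requires $\mu$ to strictly dominate the fixed root, i.e.\ $\abs{\mu}^2>\abs{z}^2$. Using the identity $\abs{\mu}^2-\abs{z}^2=\Re\bigl[(\mu-z)\overline{(\mu+z)}\bigr]$ together with $\mu-z=(a-2)z+b=-\bigl((2-a)z-b\bigr)$ and $\mu+z=az+b$, this dominance inequality becomes exactly $\Re\bigl((az+b)\overline{(2-a)z-b}\bigr)<0$, recovering the description of the isolated set.

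The main obstacle is the rigorous application of the Beraha--Kahane--Weiss theorem rather than the algebra above. I must verify its non-degeneracy hypotheses: that $\alpha$ and $\beta$ do not vanish identically, that $\lambda_+$ and $\lambda_-$ are not proportional as functions of $z$ (this is precisely where the assumption $\mathcal W\neq\{z^n\}$ is used), and that the branch points $P^2+4Q=0$, at which the closed form degenerates, contribute no spurious limits. Equally delicate is separating the isolated from the non-isolated limits at the junctions of the two loci, and checking that along the open arc both coefficients stay nonzero so that the equimodular points genuinely attract zeros. This is the step where $\Delta_\Delta<0$ does the real work, guaranteeing that the branch points form a conjugate pair bounding a nondegenerate arc rather than collapsing to a real segment.
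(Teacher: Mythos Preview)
The paper does not prove this theorem at all: it is quoted verbatim as a special case of \cite[Theorem 2.3]{WZ21} and then applied as a black box to the sequence $\{F_n\}$. So there is nothing in the present paper to compare your argument against.

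That said, your outline is the standard route to such results and is essentially the one in the Wang--Zhang paper you are reconstructing: solve the linear recurrence via the characteristic roots $\lambda_\pm$ of $\lambda^2-(az+b)\lambda-(cz+d)=0$, and apply the Beraha--Kahane--Weiss theorem to split the limit zero set into the equimodular curve (non-isolated limits) and the vanishing-coefficient locus (isolated limits). Your algebra checks: the equimodular condition $|\lambda_+|=|\lambda_-|$ reduces to $P^2/Q\in[-4,0]$, whose boundary $P^2+4Q=0$ gives exactly $x_\Delta^\pm=(-ab-2c\pm 2\sqrt{\Delta_\Delta})/a^2$ with $P=0$ giving the midpoint $-b/a$; and substituting $\lambda=z$ into the characteristic equation yields $(1-a)z^2-(b+c)z-d=0$, with the dominance requirement $|\mu|>|z|$ for $\mu=P-z$ unpacking via $|\mu|^2-|z|^2=\Re\bigl((\mu+z)\overline{(\mu-z)}\bigr)$ to the stated real-part inequality.

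You are also right that the honest work lies in verifying the Beraha--Kahane--Weiss hypotheses. One small correction: the role of the exclusion $\mathcal W\neq\{z^n\}$ is not that $\lambda_\pm$ would otherwise be proportional, but that in the excluded case $a=1$, $b+c=0$, $d=0$ the characteristic roots factor as $\lambda_+=z$, $\lambda_-=b$ and one of the BKW coefficients, namely $\beta=(\lambda_+-z)/(\lambda_+-\lambda_-)$, vanishes identically; that is the degeneracy the hypothesis rules out. With that adjustment, your sketch is a faithful reconstruction of the cited proof.
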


Deriving the limits of zeros of the sequence $\{F_n(z)\}_{n\ge 1}$ is a direct application of \cref{thm:lz}.
\begin{proposition}\label{prop:limit}
The sequence $\{F_n(z)\}_{n\ge 1}$
has no isolated limit zeros,
and its set of non-isolated limit zeros is 
the left half part of the circle 
\[
\abs{z-1}=2,
\]
including the northern and southern poles $1\pm 2i$.
\end{proposition}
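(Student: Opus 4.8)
The plan is to obtain \cref{prop:limit} as a direct instance of \cref{thm:lz} after recording how the recurrence \cref{pf:rec:W} for $W_n(z)$ fits the template, and then translating the conclusion back through the chain of transformations $F_n\mapsto G_n\mapsto H_n\mapsto W_n$ described in the proof of \cref{prop:zeros}. The recurrence \cref{pf:rec:W} reads $W_n=2(z+1)W_{n-1}-(z^2+1)W_{n-2}$, which is not literally of the form in \cref{thm:lz} because the coefficient $-(z^2+1)$ is quadratic rather than linear in $z$; so the first step is to argue that \cref{thm:lz} still applies in the shape actually needed here. In fact the cleanest route is to invoke the general Wang--Zhang theorem \cite[Theorem 2.3]{WZ21} for the type $(1,1)$ recurrence --- which the authors announce in the abstract --- and note that the companion polynomials $p(z)=2(z+1)$, $q(z)=-(z^2+1)$ have $\deg p=1$, $\deg q=2$, so the relevant discriminant-type quantity and the limiting curve are governed by the leading behaviour of $q$. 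Alternatively, and more in keeping with the excerpt, I would homogenize: write $W_n(z)=(z-i)^{n}U_n\!\brk1{(z+i)/(z-i)}$ or a similar M\"obius substitution chosen so that the transformed sequence $U_n$ does satisfy a genuine type $(1,1)$ recurrence $U_n=(a w+b)U_{n-1}+(cw+d)U_{n-2}$ with $ac\ne0$; this is exactly the inverse of steps (2)--(5) in the proof of \cref{prop:zeros}, so the substitution and the resulting constants $a,b,c,d$ can simply be read off from there.

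Concretely, I would reverse-engineer: \cref{prop:zeros} builds $W_n$ from $G_n(z)=F_n(2z+1)$ via $H_n(z)=G_n\!\brk1{(z+i)/(iz+1)}$, then $W_n(z)=e^{in\pi/4}2^{-n/2}(z-i)^{n}H_{n+1}(z)$ (up to the index shift). Running \cref{gf:A}, namely $F_n=(1+z)F_{n-1}+(1-z)F_{n-2}$, through the single M\"obius map $z\mapsto 2\cdot\frac{z+i}{iz+1}+1$ produces a rational recurrence for $H_n$; clearing the denominator $(iz+1)$ once per step and absorbing the scalar $e^{i\pi/4}/\sqrt2$ converts this into the polynomial recurrence \cref{pf:rec:W}. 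Since each of these maps is invertible, I can equally present $F_n$ as obtained from a type $(1,1)$ sequence $\{V_n\}$ by an explicit M\"obius change of variable plus multiplication by a linear-in-$z$ power; then \cref{thm:lz} applies verbatim to $\{V_n\}$, and the limit-of-zeros set of $\{F_n\}$ is the image of the limit-of-zeros set of $\{V_n\}$ under that M\"obius map (limits of zeros transform covariantly under M\"obius maps, the multiplicative factor $(z-\text{const})^{n}$ contributing only the single fixed extra zero, which is not a limit point of the genuine zeros).

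The remaining steps are then bookkeeping: (i) identify the constants $a,b,c,d$ for the standardized sequence and check $ac\ne0$ and that $\{V_n\}$ is not $\{z^n\}$; (ii) compute $\Delta_\Delta=c^2+a(bc-ad)$ and verify $\Delta_\Delta<0$, which is what puts us in the ``circular arc'' regime of \cref{thm:lz}; (iii) compute the endpoints $x_\Delta^\pm=(-ab-2c\pm2\sqrt{\Delta_\Delta})/a^2$ and the through-point $-b/a$, and map all three back through the M\"obius substitution to confirm they land on $1\pm2i$ and on the leftmost point $-1$ of the circle $\abs{z-1}=2$; and (iv) check the inequality $\Re\brk1{(az+b)\overline{(2-a)z-b}}<0$ defining the isolated-limit set is never satisfied at the two roots of $(1-a)z^2-(b+c)z-d=0$, equivalently (after transport) that $F_n$ has no isolated limit zeros. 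Step (iii) also needs the observation, already contained in \cref{prop:zeros}, that the zeros actually lie on the \emph{left} half of the circle, so the non-isolated limit set --- which a priori is only an arc --- is the closed left semicircle including its endpoints $1\pm2i$; the fact that the arc passes through $-b/a$, whose image is the point $-1$, pins down which of the two arcs it is.

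The main obstacle I anticipate is purely the matching step: \cref{thm:lz} as quoted is stated for a normalized recurrence with $W_0=1$, $W_1=z$ and linear coefficients $az+b$, $cz+d$, whereas our $W_n$ has $W_1=2(z+1)$ and a \emph{quadratic} lower coefficient $-(z^2+1)$. So the real work is choosing the M\"obius normalization that lands exactly on the hypotheses of \cite[Theorem 2.3]{WZ21} (not merely the special case \cref{thm:lz} transcribed above), and then verifying that the $\Delta_\Delta<0$ branch is the operative one. Everything after that --- the arithmetic of $x_\Delta^\pm$, $-b/a$, the discriminant, and transporting three points back along an explicit M\"obius map --- is routine and can be stated as ``a direct computation,'' consistently with the paper's phrasing that \cref{prop:limit} is ``a direct application of \cref{thm:lz}.''
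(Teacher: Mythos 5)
There is a genuine gap: the step you explicitly defer as ``the real work'' --- finding the normalization that puts the sequence exactly into the hypotheses of \cref{thm:lz} --- is in fact the whole content of the proof, and neither of the routes you sketch supplies it. The paper does not use the polynomials $W_n$ of \cref{prop:zeros} at all here. It simply sets $W_n(z)=F_{n+1}(z-1)$; substituting $z\mapsto z-1$ in \cref{gf:A} gives $W_n(z)=zW_{n-1}(z)+(2-z)W_{n-2}(z)$ with $W_0(z)=1$ and $W_1(z)=z$, which matches \cref{thm:lz} verbatim with $(a,b,c,d)=(1,0,-1,2)$. Then $\Delta_\Delta=c^2+a(bc-ad)=-1<0$, the only candidate isolated limit $z=2$ fails the real-part inequality (the quantity there equals $4>0$), the non-isolated limit set is the arc of $|z-2|=2$ from $2-2i$ to $2+2i$ through the point $-b/a=0$, i.e.\ the left half circle, and translating back by one unit via $F_n(z)=W_{n-1}(z+1)$ gives the left half of $|z-1|=2$. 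Note the arc is pinned down by the through-point $-b/a$ alone; no appeal to \cref{prop:zeros} is needed.

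By contrast, your primary route starts from the recurrence \cref{pf:rec:W}, whose lower coefficient $-(z^2+1)$ is quadratic: that is a type $(1,2)$ recurrence, and the general Wang--Zhang theorem you propose to invoke is, like the special case \cref{thm:lz}, a statement about type $(1,1)$ sequences (both coefficients linear), so it does not apply there. Your fallback --- reverse the transformations of \cref{prop:zeros} to land on a genuine type $(1,1)$ sequence --- only takes you back to $F_n$ itself, whose recurrence \cref{gf:A} has the right shape but whose initial values $F_1=1$, $F_2=1+z$ violate the normalization $W_0=1$, $W_1=z$ required by \cref{thm:lz}; repairing exactly that mismatch is the shift $z\mapsto z-1$ (applied to $F_{n+1}$), which your proposal never identifies. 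A secondary concern is that transporting limits of zeros through the M\"obius map together with the factor $(z-i)^n$ requires care near the pole and the point at infinity, but this becomes moot once the correct one-line substitution is found, since then no M\"obius transport is needed at all.
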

\begin{proof}
Let $W_n(z)=F_{n+1}(z-1)$. Substituting $z$ by $z-1$ in \cref{gf:A}, we obtain
\[
W_n(z)=zW_{n-1}(z)+(2-z)W_{n-2}(z)\quad\text{for $n\ge 2$},
\]
with initial values $W_0(z)=1$ and $W_1(z)=z$.
By routine computation, we obtain from \cref{thm:lz} that 
the sequence $\{W_n(z)\}_{n\ge0}$ has no isolated limits of zeros,
and its non-isolated zeros is the left half part of the circle 
\[
\abs{z-2}=2.
\]
Since $F_n(z)=W_{n-1}(z+1)$, translating the circle to the left by one unit, 
one obtains the desired results for limits of zeros of the sequence $\{F_n(z)\}_{n\ge 1}$.
\end{proof}

Combining \cref{prop:zeros,prop:limit}, we obtain a proof of \cref{thm:rg}.

\end{document}